\newtheorem{theorem}{Theorem}[section]
\newtheorem{lemma}{Lemma}[section]
\newtheorem{remark}{Remark}[section]
\newtheorem{example}{Example}[section]
\newcommand{\R}{\mathbb{R}} 
\def\ue{u^\epsilon}
\def\ve{v^\epsilon}
\def\e{\epsilon}
\def\div{\rm div}
\def\mes#1{|#1|}
\def\ratSV{R(P)}
\begin{document}

\title{Mathematical Homogenization in the  Modelling
of  Digestion in the Small Intestine }

\author{M. Taghipoor$^{*,\dagger}$, G. Barles\footnote{
Laboratoire de Math\'ematiques et Physique Th\'eorique (UMR CNRS 6083).
F\'ed\'eration Denis Poisson (FR CNRS 2964) Universit\'e de Tours.
Facult\'e des Sciences et Techniques, Parc de Grandmont, 37200
Tours, France}, Ch. Georgelin$^{*}$,\\ J.-R. Licois$^{*}$
\& Ph. Lescoat\footnote{INRA, UR83 Recherches Avicoles, 37380 Nouzilly, France.}}

\date{}

\maketitle

\begin{abstract}
Digestion in the small intestine is the result of complex mechanical and biological phenomena which can be modelled at different scales. In a previous article, we introduced a system of ordinary differential equations for describing the transport and degradation-absorption processes during the digestion. The present article sustains this simplified model by showing that it can be seen as a macroscopic version of more realistic models including biological phenomena at lower scales. In other words, our simplified model can be considered as a limit of more realistic ones by averaging-homogenization methods on biological processes representation.
\end{abstract}

\vspace{.5cm} {\small \noindent{\bf Key-words :} Digestion in the small intestine, peristalsis, intestinal villi, homogenization, viscosity solutions

\vspace{0.3cm}
\noindent{\bf AMS subject classifications : }
92A09,   
35B27,  
34C29,  
49L25. 

\section{Introduction}\label{introduction}
When building a model for digestion in the small intestine, difficulties occur. The first one is the extreme complexity of the mechanical/biological phenomena. Transport of the bolus through the peristaltic waves, feedstuffs degradation by numerous enzymatic reactions and the active/passive absorption of the nutrients by the intestinal wall are known to be the key steps but they are not biologically nor fully understood and neither quantitavely parameterized. Modelling approaches are a way to integrate complex mechanisms representation of these phenomena helping to improve our understanding of them. Since it is almost impossible to build direct experiments for studying the digestion in the small intestine, modelling is a way to test in silico hypotheses that could be challenged through limited in vivo experiments.

 A second difficulty relies on the complex environment within the digestive tract. For example, the intestinal wall plays a key role in the transfer of the digested food in the blood and interferes in the degradation of the bolus via the brush-border enzymes and causes the transit of the bolus by transmitting the pulses coming from the peristaltic waves.

Thirdly, digestion in the small intestine has contrasted but relevant macroscopic and microscopic scales, both in space and time. To give few figures, the length of the small intestine in a growing pig reaches 18 meters, which is a large figure compared to its radius (2-3 centimeters) and even more compared to the size of the villi (around 1 millimeter). In the same way, the bolus stays in the small intestine for several hours, while the efficient peristaltic waves which ensure the transport of the bolus, start approximatively every 12 seconds from the pylorus.

Because of these different scales, a model based on partial differential equations and capturing all the interesting phenomena, would be too complicated and impossible to solve numerically. Therefore we have adopted in \cite{TLGLB} a model based on ordinary differential equations (ode in short) : each bolus of feedstuffs coming from the stomach is identified as a cylinder and the odes describe the evolution of the position and composition of the cylinder. Since digestion could be described by a transport equation (or a system of such equations) with reacting terms, our strategy was essentially to use the Characteristics of this equation. At least numerically this type of Lagrangian method appears to be more efficient. We refer to \cite{TLGLB} for details on our different models since several stages of the modelling process were developped in this paper. 

The aim of the present article is to provide mathematical justifications of some assumptions of the modelling presented in \cite{TLGLB}. We focus on the bolus transport and 
 on the effects related to absorption and enzymatic breakdown by the brush border enzymes, phenomena which are related to averaging/homogenization type processes.

More precisely, in Section~\ref{ch2}, we examine the effects of the pulses generated by the peristaltic waves. Considering that the time scale for these pulses is small compared to the duration of the digestion i.e. that their frequency is high, we rigorously establish that their effect is the same as the one of a constant driving force. This result is biologically very interesting since it allows to get rid of this very small time scale and to do the numerical computations in a much more efficient way opening ways to alternative experimental approaches on digestive tract studies. Related and more general results on the homogenization of odes can be found in L.~C.~Piccinini\cite{Piccinini1978} but we point out that our case does not fall into the scope of \cite{Piccinini1978}.

In Section~\ref{ch3}, we consider the complex phenomena related to the villi and micro-villi : the active/passive absorption by the intestinal wall and the brush border enzymatic reactions. In order to study these phenomena, we introduce a $3$-d model where we focus on the boundary effects. As a consequence, the other phenomena are highly simplified. The lumen of the small intestine is modelled as a cylindrical type, periodic domain whose axis is $\R e_1$, where $e_1 := (1,0,0)$. In order to model the villi, this domain has an highly oscillatory boundary of order $\varepsilon^{-1}$ while its radius is of order $\varepsilon$. In this domain, we have a system of parabolic, transport-diffusion equations with oscillatory coefficients for the absorbable and non-absorbable nutrients. The key feature is the Neumann boundary condition which describes the phenomena on the intestinal wall : the effects of the brush-border enzymes together with the active-passive absorption.

Using homogenization method, we prove that, when $\varepsilon$ tends to $0$, this problem converges to a $1$-d system of transport-reaction equation. The key issue is to show how the effects of the diffusion and the degradation- absorption on the highly oscillatory boundary are combined in order to produce the final reaction terms. For the readers convenience, we provide both a formal and a rigorous proof of this result. The formal proof gives rather explicit formulas which can easily be interpreted from the biological point of view. Moreover we point out that, even if we are using a very simplified framework, we show that it captures the key features of the absorption process.

The homogenization methods used in Section~\ref{ch3} are based on viscosity solutions' theory and in particular the ``perturbed test function method'' of L. C.~Evans \cite{Evans1989,Evans1992} : we refer to \cite{BDLS} and references therein for the applications of such methods for problems with Neumann boundary conditions and oscillatory boundary. To our knowledge, it is the first time that such methods are used to obtain a convergence of a $3$-d problem to a $1$-d problem.\\

\noindent{{\bf Acknowledgement.}
The multidisciplinary collaboration on this research project between the INRA Center of Nouzilly and the Laboratoire de Math\'ematiques et Physique Th\'eorique was initiated within and supported by the CaSciModOT program (CAlcul SCIentifique et MOD\'elisation des universit\'es d'Orl\'eans~et de Tours) which is now a Cluster of the french Region Centre.
This collaboration also takes place in a CNRS-INRA PEPS program ``Compr\'ehension et Mod\'elisation du devenir de l'aliment dans le tube digestif``. This work is part of the PhD thesis of Masoomeh Taghipoor, financed by CNRS and INRA.}

\section{Transport Equation}\label{ch2}

Peristalsis is the phenomenon in which a progressive wave of contraction or expansion (or both) propagates along a tube \cite{Mernone2002,Miftahof2007, Randall1997,TLGLB}.
The peristaltic waves are responsible for the fluid dynamics of the contents of the small intestine and can be divided into segmentation and propulsive contractions. The segmentation motion are responsible for mixing the bolus. 
Propulsive contractions are responsible for transporting the bolus through the small intestine.
The effective peristaltic waves generated in the pylorus reach the bolus approximatively every 12 seconds. This is very small compared to the time scale of digestion phenomena which lasts several hours. This causes the observation of very rapid variations in the velocity of the bolus.

In \cite{TLGLB}, the authors present a first simplified model of bolus transport along the small intestine. We use Homogenization Theory to simplify this equation to replace the periodically oscillating velocity by an averaged one (\ref{hvelocity}). This section provides a rigorous mathematical justification of this transport equation.

 \begin{center}
  \includegraphics[width=\textwidth]{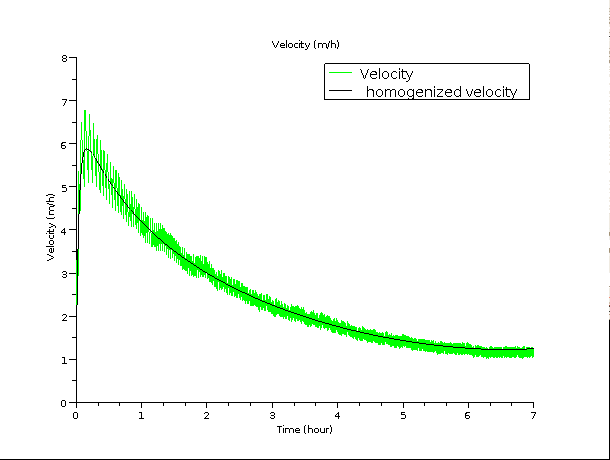}
\figcaption{Periodically oscillating velocity and averaged one. }%
 \label{hvelocity}
 \end{center}

\vspace{1cm}
\subsection{Position of the problem}

In this section, we formulate a simplified version of the transport problem. The small intestine is represented by the interval $[0,+\infty)$ and the position of the bolus at time $t$ is given by $x(t) \in [0,+\infty)$. Roughly speaking, $x(t)$ is the distance between the center of bolus and the pylorus.

The bolus is composed of different types of nutrients, say nutrients \\$1, 2, \cdots, K$ and the quantity of nutrient $i$ at time $t$ is denoted by $y_i (t)$ for $i=1, 2, \cdots, K$ and we set $y(t):=(y_1(t), y_2(t),\cdots,y_K(t))$. The variation of the different $y_i$ depends on its production and degradation rate which is summarized through the equation
\begin{equation}\label{equ33}
 \dot{y} (t) =d(x (t) ,y(t))\; ,
\end{equation}
where $d:=(d_1,d_2,...d_n)$ with $d_i : [0,+\infty) \times \R^K \to [0,+\infty)$ a Lipschitz continuous function. Since we are mainly interested in the transport equation in this section, this simple equation is written to fix ideas but also because the transport equation will depend on the composition of the bolus $y$.

The peristaltic waves are created at the pylorus and they travel along the intestinal wall at a quasi-constant velocity : the average wave velocity of each peristaltic wave is about  $c\simeq 7.2 \small m/h$. These waves are periodic of period denoted by $\epsilon \ll 1$ and to model them, we can say that at time $t$, an electric signal of size $\psi(t/\epsilon)$ starts from the pylorus and reaches a point $x$ of the small intestine at time $t + x/c$. Here we assume that $\psi(s) \equiv 0$ if $s \leq 0$ and on $[0,+\infty)$, $\psi$ is the restriction of a smooth, $1$-periodic function on $\R$. 

At time $t$, the bolus is at the position $x(t)$ and is reached by the wave generated at time $s=t-x(t)/c$ whose intensity is $\psi(s/\epsilon)$. we assume moreover that the impact of this pulse on acceleration of the bolus is given by a smooth, positive function $g_\epsilon (s,v, x,y) $ where, as above, $s$ is the time when the pulse was generated, $v$ is the relative velocity of a pulse with respect to the bolus velocity ($v=(c-\dot{x}(s) )/c$ ), $x$ is the position of the bolus and $y$ its composition. 

Indeed, according to \cite{Zhao1997} and \cite{Rivest2000} the efficiency of the peristaltic waves increases with the size of the bolus which is roughly speaking the sum of the $y_i$ for $1\leq i\leq K$, and decreases with the distance from pylorus $x(t)$.

The function $g_\epsilon$ is also $\epsilon$-periodic in $s$, we emphasize this fact by writing
$$g_\epsilon (s,v,x,y)= g (s/\epsilon,v, x,y) \; ,$$
where $g(s,v,x,y)$ is a smooth function which is $1$-periodic in $s$ for $s>0$.

Taking into the friction inside the small intestine as in \cite{TLGLB} through a $-k(t)\dot{x}(t)$-term where $k(t)>0$ for any $t$, the equation for the transport of the bolus reads
\begin{equation}\label{equ29}
\ddot{x}(t)=g\left(\epsilon^{-1}(t-x(t)/c),1-\dot{x}(t)/c, x(t),y(t)\right)-k(t)\dot{x}(t)
\end{equation}
with $x(0)=0$ and $\dot{x}(0)=v_0$ where $v_0<c$.

Having in mind the example of a water wave in a channel, if the bolus velocity is the same or is close to the wave one, then obviously the peristaltic wave will have either no effect or at least a small effect on the bolus velocity. Translated in term of $g$, this means that $g(t,0, x,y)=0$ and even $g(t,v, x,y)=0$ if $v\leq 0$. Thus there exist a smooth function $\tilde{g} : \R \times \R \times  [0,+\infty) \times \R^K \to [0,+\infty)$
such that $g(s,v, x,y)=\tilde{g}(s,v, x,y)v$. We notice that, since we assume $g$ to be positive, then $\tilde{g}(s,v, x,y)\geq 0$ if $v\geq 0$ while we have $\tilde{g}(s,v, x,y)\equiv 0$ if $v\leq 0$ .
 
Because of the dependence of Equation~(\ref{equ29}) on $\epsilon$, we denote their solutions by $x^\epsilon, y^\epsilon$ and our aim is to study the behavior of these solutions as $\epsilon$ tends to $0$, and in particular the behavior of $x^\epsilon$.
\subsection{The Asymptotic Behavior}

We first rewrite the equation satisfied by  $x^\epsilon, y^\epsilon$. For Equation~(\ref{equ29}), we have
\begin{equation}\label{equ7}
\ddot{x}^{\epsilon}=
(1-\dot{x}^{\epsilon}/c)\tilde{g}(\epsilon^{-1} (t-x^{\epsilon}/c) ,1-\dot{x}^{\epsilon}/c, x^{\epsilon},y^{\epsilon})
-k(t)\dot{x}^{\epsilon}
\end{equation}
while Equation (\ref{equ33}) reads
 \begin{equation}\label{equ35}
 \dot{y}^{\epsilon}=
d(x^{\epsilon},y^{\epsilon}).
\end{equation}
The initial conditions are 
\begin{equation}\label{equ34}
 x^{\epsilon}(0)=0, 
\quad\dot{x}^{\epsilon}(0)=v_0,
 \quad  y^\epsilon(0)=y_0,
\end{equation}
where $v_0 <c$ because of physiological reasons.

In order to formulate our result, we introduce the function $F(t,V,X,Y)$ given by
\begin{equation}\label{equ31}
F(t,V,X,Y)=\int_0^t\tilde{g}(s,V,X,Y)ds
\end{equation}
and, recalling that $\tilde{g}(s,V,X,Y)$ is $1$-periodic for $s\geq0$, we denote the averaged of $F$ over a period by $\bar{F}(V,X,Y)$. Of course we have
$$\bar{F}(V,X,Y)=F(1,V,X,Y)=\int_0^1\tilde{g}(s,V,X,Y)ds,$$
and $F, \bar{F}$ are smooth functions since $\tilde g$ is a smooth function.
\begin{example}\label{ex1}
 In \cite{TLGLB}, the authors introduce the the function $g$ as 
\begin{equation*}
 g(s,v,x,y):= \dot{\psi}(t-x/c)\cdot v\frac{c_0+c_1y}{a+bx},
\end{equation*}
for the real non-negative values $c_0, c_1, a$ and $b$. Where $\displaystyle  y= \Sigma_{i=1}^{n} y_i$.
\end{example}

Our main result is the
\begin{theorem}\label{th1}
 Let $(x^\epsilon, y^\epsilon)$ the unique solution of equations (\ref{equ7})-(\ref{equ35})-(\ref{equ34}), then the sequence $(x^\epsilon, y^\epsilon )_{\epsilon >0}$ converges strongly in $C^1([0,T],[0,+\infty))$ to $(x,y)$ the unique solution of the averaged system of equations
\begin{align}\label{equ30}
\ddot{x}(t)=
&\dfrac{c-\dot{x}(t)}{c}\bar{F }( 1-\dot{x}(t)/c,{x}(t),{y}(t))
-k(t)\dot{x}(t)\\
\dot{y}(t)=&d(x(t),y(t)) \nonumber
\end{align}
with the initial conditions
\begin{equation}
x(0)=0,
\quad\dot{x}(0)=v_0,
 \quad  y(0)=y_0.
\end{equation}
\end{theorem}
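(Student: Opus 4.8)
\noindent The plan is to combine standard a priori bounds and compactness with an elementary averaging identity that exploits a special structural feature of \eqref{equ7}: the prefactor $1-\dot x^\epsilon/c$ multiplying $\tilde g$ is exactly the time derivative of the fast phase $s\mapsto s-x^\epsilon(s)/c$. First I would establish bounds uniform in $\epsilon$. Since $\tilde g\ge 0$ and $\tilde g(\cdot,v,\cdot,\cdot)\equiv 0$ for $v\le 0$, at $\dot x^\epsilon=0$ one has $\ddot x^\epsilon=\tilde g(\cdots)\ge 0$ and at $\dot x^\epsilon=c$ one has $\ddot x^\epsilon=-k(t)c<0$; as $v_0<c$ this traps $\dot x^\epsilon$ in $[0,c)$ on $[0,T]$, hence $0\le x^\epsilon\le cT$, and Cauchy--Lipschitz for \eqref{equ35} (with $d$ Lipschitz) keeps $y^\epsilon$ in a fixed compact set; in particular the solutions are defined on all of $[0,T]$. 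Plugging these back into \eqref{equ7} gives $\|\ddot x^\epsilon\|_{L^\infty(0,T)}\le C_T$, and $\dot y^\epsilon=d(x^\epsilon,y^\epsilon)$ is bounded with $x^\epsilon,y^\epsilon$ equicontinuous. By Arzel\`a--Ascoli, along a subsequence $x^\epsilon\to x$ and $\dot x^\epsilon\to \dot x$ uniformly on $[0,T]$ (so $x\in C^1$) and $y^\epsilon\to y$ in $C^1$; passing to the limit in the integrated form of \eqref{equ35} yields $\dot y=d(x,y)$.

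The heart of the argument is the oscillatory term in the integrated version of \eqref{equ7}. Writing $\phi^\epsilon(s):=s-x^\epsilon(s)/c$, so that $\dot\phi^\epsilon=1-\dot x^\epsilon/c\in[0,1]$ and $\phi^\epsilon\ge 0$, the quantity to analyse is
\[
I^\epsilon(t):=\int_0^t \dot\phi^\epsilon(s)\,\tilde g\!\left(\tfrac{\phi^\epsilon(s)}{\epsilon},\dot\phi^\epsilon(s),x^\epsilon(s),y^\epsilon(s)\right)ds .
\]
Introduce $H(\tau,V,X,Y):=\int_0^\tau\big(\tilde g(\sigma,V,X,Y)-\bar F(V,X,Y)\big)\,d\sigma$, which, because $\tilde g(\cdot,V,X,Y)$ is $1$-periodic with mean $\bar F(V,X,Y)$, is smooth, $1$-periodic and bounded in $\tau$, together with its $V,X,Y$ derivatives, uniformly for $(V,X,Y)$ in the compact set produced by the first step. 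Since $\partial_\tau H=\tilde g-\bar F$, the chain rule gives
\[
\dot\phi^\epsilon\,\tilde g\!\left(\tfrac{\phi^\epsilon}{\epsilon},\dot\phi^\epsilon,x^\epsilon,y^\epsilon\right)
=\dot\phi^\epsilon\,\bar F(\dot\phi^\epsilon,x^\epsilon,y^\epsilon)
+\epsilon\,\frac{d}{ds}H\!\left(\tfrac{\phi^\epsilon}{\epsilon},\dot\phi^\epsilon,x^\epsilon,y^\epsilon\right)
-\epsilon\big(\partial_VH\,\ddot\phi^\epsilon+\partial_XH\,\dot x^\epsilon+\partial_YH\,\dot y^\epsilon\big).
\]
Integrating on $[0,t]$ and using the uniform bounds above ($\ddot\phi^\epsilon=-\ddot x^\epsilon/c$, $\dot x^\epsilon$, $\dot y^\epsilon$ bounded, $H$ and its derivatives bounded), the last two terms contribute $O(\epsilon)$, so
\[
I^\epsilon(t)=\int_0^t \dot\phi^\epsilon(s)\,\bar F\big(\dot\phi^\epsilon(s),x^\epsilon(s),y^\epsilon(s)\big)\,ds+O(\epsilon).
\]
Since $\bar F$ is continuous and $\dot\phi^\epsilon\to 1-\dot x/c$, $x^\epsilon\to x$, $y^\epsilon\to y$ uniformly, we obtain $I^\epsilon(t)\to\int_0^t\frac{c-\dot x(s)}{c}\,\bar F\big(1-\dot x(s)/c,x(s),y(s)\big)\,ds$, while the friction term $-k(s)\dot x^\epsilon(s)$ passes to the limit directly. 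Hence $(x,y)$ solves the averaged system \eqref{equ30} with the prescribed initial data.

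To conclude, I would observe that \eqref{equ30} has a unique solution on $[0,T]$: $\bar F$ is smooth hence locally Lipschitz, $d$ is Lipschitz, $k$ is continuous, and the a priori bounds apply verbatim, so Cauchy--Lipschitz gives global existence and uniqueness. Since every subsequence of $(x^\epsilon,y^\epsilon)_{\epsilon>0}$ thus has a further subsequence converging in $C^1([0,T])$ to this unique $(x,y)$, the whole family converges strongly in $C^1([0,T])$, as claimed.

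The main obstacle is the passage to the limit in $I^\epsilon$, i.e. the averaging step. What makes it work despite the fast phase $\phi^\epsilon$ having a derivative $1-\dot x^\epsilon/c$ that is not bounded away from zero is precisely that this same factor stands in front of $\tilde g$ (equivalently $g(s,v,x,y)=\tilde g(s,v,x,y)v$), so the oscillation is switched off exactly where the phase would stall; this allows one to rewrite the oscillatory term as its period-average plus $\epsilon$ times a total derivative of a bounded periodic function, with no non-resonance hypothesis. This is the reason the problem is not covered by the classical results of \cite{Piccinini1978}.
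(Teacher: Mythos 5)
Your proof is correct, and while it follows the paper's overall scheme (uniform bounds, Arzel\`a--Ascoli compactness, identification of the limit, uniqueness of the averaged system to upgrade subsequential to full convergence), the key averaging step is genuinely different and in fact simpler. The paper works with the full primitive $F(t,V,X,Y)=\int_0^t\tilde g(s,V,X,Y)\,ds$ of (\ref{equ31}), rewrites the forcing as $\epsilon\frac{d}{dt}F-\epsilon\big(\dot v^\epsilon\partial_VF+\dot x^\epsilon\partial_XF+\dot y^\epsilon\partial_YF\big)$ and integrates; since $\epsilon\,\partial_\xi F(\epsilon^{-1}\tau,\cdot)$ only converges to $\tau\,\partial_\xi\bar F$ (it is $O(1)$, not $o(1)$), these correction terms survive in the limit, which is why the paper needs the $L^\infty$ weak-$*$ convergence of $\ddot x^\epsilon$ to handle the $\partial_VF$-term and ends up with the integral identity (\ref{equ42}), whose extra integrals cancel only after observing that the right-hand side is $C^1$ and differentiating. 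You instead subtract the mean before integrating: your $H(\tau,\cdot)=\int_0^\tau(\tilde g-\bar F)\,d\sigma$ is bounded and $1$-periodic in $\tau\ge0$ (which suffices since $\phi^\epsilon\ge0$), together with its $V,X,Y$-derivatives, so every correction term is $O(\epsilon)$, no weak-$*$ compactness of $\ddot x^\epsilon$ is required, and the limit identity is directly the integrated averaged equation --- the crucial structural point, which you identify explicitly, being that the factor $1-\dot x^\epsilon/c$ in front of $\tilde g$ is exactly $\dot\phi^\epsilon$, so no non-degeneracy of the fast phase is needed. Your a priori bound on $\dot x^\epsilon\in[0,c]$ uses a barrier/invariance argument where the paper uses the $(\dot x^\epsilon-c)^+$ trick, but the content is the same; and you make explicit the Cauchy--Lipschitz uniqueness of (\ref{equ30}) and the resulting whole-sequence convergence, which the paper only asserts. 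In short, your route buys a shorter limit passage with weaker functional-analytic machinery, at no loss of generality.
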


The key interpretation of this result is the following : the effect of frequent pulses on the transport of the bolus is the same as the one obtained through an averaged constant signal.

\begin{proof}[Proof of Theorem \ref{th1}] We prove it in two steps : first we obtain various estimates showing that the sequences $(x^\epsilon, y^\epsilon )_{\epsilon >0}$ converge strongly in $C^1$ (at least along subsequences) and, then, in the second step, we prove that they converge to the unique solution of the averaged system (\ref{equ30}) (which will imply that the whole sequence converges by a standard compactness argument).

The following lemma provides a proof of convergence of $x^\epsilon$ and $y^\epsilon$.
\begin{lemma}\label{lem1}
Let $(x^\epsilon, y^\epsilon )_{\epsilon >0}$ the unique solution of (\ref{equ7})-(\ref{equ35})-(\ref{equ34}). Then $x^\epsilon, y^\epsilon$ are uniformly bounded in $C^2$ and therefore there exists a subsequence which is converging strongly in $C^1$ and such that $\ddot{x^\epsilon}$ is converging in $L^{\infty}$ weak-$*$.
\end{lemma}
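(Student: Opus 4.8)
I would split the argument into three parts: well-posedness for each fixed $\epsilon$, an $\epsilon$-uniform a priori estimate confining the velocity of the bolus, and a compactness extraction.

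\textbf{Step 1 (well-posedness for fixed $\epsilon$).} Rewrite (\ref{equ7})--(\ref{equ35})--(\ref{equ34}) as a first-order system for $(x^\epsilon,\dot x^\epsilon,y^\epsilon)$. For each fixed $\epsilon>0$ its right-hand side is continuous in $t$ (assuming, as is implicit in (\ref{equ29}), that $k$ is continuous) and locally Lipschitz in the unknowns: the factor $\epsilon^{-1}$ merely inflates the Lipschitz constant in the $x^\epsilon$-variable, it does not spoil the Lipschitz character, since $\tilde g$, $k$ and $d$ are Lipschitz. The Cauchy--Lipschitz theorem then yields a unique maximal solution on an interval $[0,T^\epsilon_{\max})$. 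It remains to show $T^\epsilon_{\max}=+\infty$ and to get bounds on $[0,T]$ that do not depend on $\epsilon$.

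\textbf{Step 2 (the key estimate: $0<\dot x^\epsilon<c$).} This is where the structural hypothesis $g(s,v,x,y)=\tilde g(s,v,x,y)\,v$, with $\tilde g\ge 0$ for $v\ge 0$ and $\tilde g\equiv 0$ for $v\le 0$, is essential. Fix $T>0$ and, by a first-exit-time argument, work on the largest subinterval of $[0,T^\epsilon_{\max})\cap[0,T]$ on which $\dot x^\epsilon\le c$ (it is nonempty because $v_0<c$). There $1-\dot x^\epsilon/c\ge 0$, so the first term of (\ref{equ7}) is nonnegative, hence $\ddot x^\epsilon+k(t)\dot x^\epsilon\ge 0$, and the integrating-factor (Gronwall) inequality gives $\dot x^\epsilon(t)\ge v_0\exp\!\big(-\!\int_0^t k\big)>0$; in particular $0<x^\epsilon(t)\le ct\le cT$. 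Since $d$ is Lipschitz (hence of linear growth) and nonnegative, a second Gronwall estimate on $\dot y^\epsilon=d(x^\epsilon,y^\epsilon)$ bounds $|y^\epsilon(t)|$ by some constant $C_y(T)$ independent of $\epsilon$ (and keeps $y^\epsilon\ge 0$). Consequently the argument of $\tilde g$ in (\ref{equ7}) stays in a fixed compact set on this subinterval (recall $\tilde g$ is $1$-periodic in its first variable), so $0\le\tilde g(\cdots)\le M(T)$ uniformly in $\epsilon$; inserting this into (\ref{equ7}) gives $\frac{d}{dt}(c-\dot x^\epsilon)\ge-\frac{M(T)}{c}(c-\dot x^\epsilon)$, whence $c-\dot x^\epsilon(t)\ge(c-v_0)\exp(-M(T)T/c)>0$. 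Thus $\dot x^\epsilon$ stays strictly below $c$, so the exit time is $T^\epsilon_{\max}\wedge T$; since $x^\epsilon,\dot x^\epsilon,y^\epsilon$ remain bounded there, the usual continuation criterion forbids a finite-time blow-up, hence $T^\epsilon_{\max}>T$, and as $T$ is arbitrary the solution is global with $0<\dot x^\epsilon<c$ on $[0,+\infty)$.

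\textbf{Step 3 (uniform $C^2$ bounds, then compactness).} Fix $T$. Step 2 provides the $\epsilon$-independent bounds $\|x^\epsilon\|_\infty\le cT$, $\|\dot x^\epsilon\|_\infty\le c$, $\|y^\epsilon\|_\infty\le C_y(T)$; reinserting these into (\ref{equ7})--(\ref{equ35}) bounds $\|\dot y^\epsilon\|_\infty$ and $\|\ddot x^\epsilon\|_\infty$ uniformly, and since $t\mapsto(x^\epsilon,y^\epsilon)(t)$ is then uniformly Lipschitz and $d$ is Lipschitz, $\dot y^\epsilon=d(x^\epsilon,y^\epsilon)$ is uniformly Lipschitz too, i.e. $y^\epsilon$ is bounded in $W^{2,\infty}$ uniformly in $\epsilon$ (and, $k$ being continuous, $\ddot x^\epsilon$ is continuous, so $x^\epsilon$ is genuinely bounded in $C^2$). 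By Arzel\`a--Ascoli the families $(x^\epsilon,\dot x^\epsilon)$ and $(y^\epsilon,\dot y^\epsilon)$ are relatively compact in $C^0([0,T])$, so along a subsequence $(x^\epsilon,y^\epsilon)\to(x,y)$ in $C^1([0,T])$; and since norm-bounded subsets of $L^\infty([0,T])=(L^1([0,T]))^{*}$ are weak-$*$ sequentially compact ($L^1$ being separable), $\ddot x^\epsilon$ converges weak-$*$ in $L^\infty$ along a further subsequence. A diagonal extraction over $T=1,2,\dots$ promotes this to $[0,+\infty)$.

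\textbf{Expected main obstacle.} Step 2 is the crux. Until $\dot x^\epsilon$ is trapped in $[0,c]$ one controls neither the range of $x^\epsilon$ — hence not the compact set the trajectory visits — nor the size of $\tilde g$ along the trajectory, and the $\epsilon^{-1}$-oscillation in $\epsilon^{-1}(t-x^\epsilon/c)$ is a priori threatening. It is exactly the factorisation $g=\tilde g\,v$ together with the sign of $\tilde g$ that turns $\{\dot x^\epsilon=c\}$ into an upper barrier, while an elementary Gronwall inequality keeps $\dot x^\epsilon$ away from $0$; everything after that is routine Gronwall estimates and soft functional analysis. Note that this same fast oscillation is precisely what prevents $C^2$-compactness, which is why only weak-$*$ convergence of $\ddot x^\epsilon$ can be claimed.
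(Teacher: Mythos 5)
Your proposal is correct and reaches the same conclusion, but the key velocity estimate is obtained by a genuinely different mechanism than in the paper. The paper traps $\dot x^\epsilon$ in $[0,c]$ by a sign trick: it multiplies (\ref{equ7}) by $(\dot x^\epsilon-c)^+$ (resp.\ uses $(\dot x^\epsilon)^-$), observes that the right-hand side then has a sign because $\tilde g\ge 0$ for $v\ge 0$ and $\tilde g\equiv 0$ for $v\le 0$, and concludes that $t\mapsto \big((\dot x^\epsilon-c)^+\big)^2$ is nonincreasing and starts from $0$; this gives $0\le \dot x^\epsilon\le c$ directly, with no exit-time argument, no Gronwall, and no need to bound $\tilde g$ along the trajectory. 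You instead run a first-exit-time/barrier bootstrap: on the region $\{\dot x^\epsilon\le c\}$ you get the lower bound by an integrating factor, bound $y^\epsilon$, bound $\tilde g$ on a compact set by $M(T)$, and then show $c-\dot x^\epsilon\ge (c-v_0)e^{-M(T)T/c}$, which prevents exit. Your route costs a little more (it needs the uniform bound $M(T)$ on $\tilde g$, and your strict lower bound $\dot x^\epsilon\ge v_0 e^{-\int_0^t k}>0$ implicitly uses $v_0>0$, whereas the paper's $(\dot x)^-$ argument only needs $v_0\ge 0$; if $v_0=0$ your argument still goes through with the weaker conclusion $\dot x^\epsilon\ge 0$, which is all that is used afterwards), but it buys strict confinement $0<\dot x^\epsilon<c$ and, together with your Step 1, an explicit proof of global existence, which the paper simply takes for granted in the phrase ``the unique solution''. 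From the bound $0\le\dot x^\epsilon\le c$ onward the two arguments coincide: bootstrap through (\ref{equ35}) and (\ref{equ7}) to uniform $C^2$-type bounds (your remark that $y^\epsilon$ is only bounded in $W^{2,\infty}$ when $d$ is merely Lipschitz is in fact slightly more careful than the paper's ``and even in $C^2$''), then Arzel\`a--Ascoli for $C^1$ convergence and sequential weak-$*$ compactness of bounded sets in $L^\infty$ for $\ddot x^\epsilon$, with the same diagonal extraction if one wants the statement on all of $[0,+\infty)$.
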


\begin{proof}[Proof of Lemma]

We first prove that $\dot{x^\epsilon}(t)\leq c$. To this aim, we define the positive function $\phi (t)$ as follows   
\begin{equation}\label{equ9}
\phi(t)=
(\dot{x^\epsilon }(t)-c)^+ =
\left\{
 \begin{aligned}
 & \dot{x^\epsilon }(t)-c \quad &\text{if} \quad\dot{x^\epsilon }(t)-c>0\\
 & 0 \quad &\text{otherwise},
\end{aligned}
\right.
\end{equation} 
then multiply the both sides of equation (\ref{equ7}) by $\phi(t)$
$$\ddot{x}^{\epsilon}(\dot{x^\epsilon }(t)-c)^+ =\dfrac{c-\dot{x}^{\epsilon}}{c}\tilde{g}\left(\dfrac{tc-x^{\epsilon}}{c \epsilon},1-\dot{x}^{\epsilon}/c, x^{\epsilon},y^{\epsilon}\right)(\dot{x^\epsilon }(t)-c)^+ -k(t)\dot{x}^{\epsilon}\left(\dot{x^\epsilon }(t)-c\right)^+\; . $$
The right-hand side of this equation is negative since $\tilde{g}(s,V,X,Y)$ is non negative if $Y \geq 0$, and $k(t)>0$, therefore
$$ \ddot{x^\epsilon}(t)(\dot{x^\epsilon }(t)-c)^+ \leq 0$$
which is equivalent to
$$\dfrac{1}{2}\dfrac{d}{dt}(\phi^2(t))\leq 0 \; .$$
The function $\phi^2(t) $ is therefore a decreasing function. Furthermore, since $v_0 <c$, we have 
 $$\phi^2 (t) \leq \phi^2(0)= [(v_0-c)^+]^2 = 0\; ,$$
which yields the result.

Using the same method with $(\dot{x})^- = \max(-\dot{x},0)$, we can prove that $\dot{x}$ is a non-negative function. 

Gathering these information, we obtain that, for any $t$
$$0 \leq x^\epsilon (t)\leq ct \; , \; 0 \leq \dot x^\epsilon (t)\leq c \; ,$$
and therefore the sequence $(x^\epsilon)_{\epsilon >0}$ is uniformly bounded and equicontinuous on $[0,T]$. Using these informations and the equations for the $y^\epsilon$, we also see that the $y^\epsilon$ are also uniformly bounded in $C^1$ (and even in $C^2$) and coming back to the $x^\epsilon$ equation we see also that the $x^\epsilon$ are also uniformly bounded in $C^2$.

Consequently the Arzela-Ascoli compactness criterion ensures that there exists a subsequence $(x^{\epsilon_j }, y^{\epsilon_j })$ which converges in $C^1$. Moreover, since $\ddot{x}^{\epsilon}$ is bounded in $L^\infty$, we can also extract a subsequence such that $\ddot{x}^{\epsilon_j }$ converges in the $L^\infty$ weak-$*$ topology.
\end{proof}
%
We return now to the proof of Theorem~\ref{th1}. To simplify the exposure, we still denote by $(x^{\epsilon}, y^{\epsilon })$ the converging subsequence $(x^{\epsilon_j }, x^{\epsilon_j })$ and we denote by $(x,y)$ the limit.
By inserting the Definition (\ref{equ31}) into Equation (\ref{equ7}), we get
\begin{equation*}
 \ddot{x}^\epsilon(t)=
(1-
\dot{x}^\epsilon/c)\dfrac{\partial F}{\partial t}( \epsilon ^{-1}(t-x^{\epsilon}/c),1-
\dot{x}^\epsilon/c,x^{\epsilon},y^{\epsilon})
-k(t)\dot{x}^\epsilon(t) 
\end{equation*}
therefore, using the notation $v^\epsilon= 
1-\dot{x}^\epsilon/c$ and dropping most of the variables to simplify the expressions, we have
\begin{equation*}
\ddot{x}^\epsilon(t)=
 \epsilon\dfrac{d}{dt}\left[{F}(\epsilon ^{-1}(s-x^{\epsilon}/c),v^\epsilon,x^{\epsilon},y^{\epsilon})\right]
-\epsilon\dot{v^\epsilon}\frac{\partial F}{\partial V}
-\epsilon\dot{x}^\epsilon\frac{\partial F}{\partial X}
-\epsilon\dot{y}^\epsilon\frac{\partial F}{\partial Y}-k\dot{x}^\epsilon
\end{equation*}
and then integrate the both sides of equation  over $[0,t]$
\begin{align*}
\int_0 ^t \ddot{x}^{\epsilon}ds=
\epsilon \int_0 ^t \dfrac{d}{ds}(F( \epsilon ^{-1}&(s-x^{\epsilon}/c),v^\epsilon,x^{\epsilon},y^{\epsilon}))ds\\
&-\epsilon\int_0 ^t (\dot{v^\epsilon}\frac{\partial F}{\partial V}
+\dot{x}^\epsilon \frac{\partial F}{\partial X}
+\dot{y}^\epsilon \frac{\partial F}{\partial Y})ds
-\int_0 ^t k\dot{x}^\epsilon ds
\end{align*}
which leads to
\begin{align*}
\dot{x}^{\epsilon}(t) -v_0 =
\epsilon F( \epsilon ^{-1}&(t-x^{\epsilon}/c),1-
\dot{x}^\epsilon/c,x^{\epsilon},y^{\epsilon})\\
&-\epsilon\int_0 ^t (\dot{v^\epsilon}\frac{\partial F}{\partial V}
+\dot{x}^\epsilon \frac{\partial F}{\partial X}
+\dot{y}^\epsilon \frac{\partial F}{\partial Y})ds
-\int_0 ^t k\dot{x}^\epsilon ds
\end{align*}
since $F(0,V,X,Y)=0$ for any $V,X\in \R$ and $Y\in \R^K$.

Now we have to let $\epsilon$ tend to $0$. First, since $\tilde{g}$ is periodic, it is standard to prove that 
$$ \epsilon {F}( \epsilon ^{-1}t,V,X,Y) \to \bar{F}(V,X,Y)t\; ,$$
locally uniformly. Especially, it is easy to see that if $n\leq\epsilon^{-1}t<n+1$, therefore $\displaystyle \epsilon {F}( \epsilon ^{-1}t,V,X,Y) \to \e[n\bar{F}(V,X,Y)+O(1)] .$

In the same way, because of the definition of $F$ and the regularity properties of $\tilde{g}$, for $\xi = V,X,Y$ we also have
$$ \epsilon {\frac{\partial F}{\partial \xi}}( \epsilon ^{-1}t,V,X,Y) \to \frac{\partial \bar{F}}{\partial \xi}(V,X,Y)t\; \quad \hbox{locally uniformly}.$$    
As a consequence, since $x^{\epsilon}$ and $y^{\epsilon}$ are converging respectively  to $x$ and $y$ in $C^1$, we have also
$$
\epsilon F ( \epsilon ^{-1}(s-x^{\epsilon}(s)/c),v^\epsilon (s),x^{\epsilon} (s) ,y^{\epsilon} (s)) \to \bar{F}(v (s) ,x (s),y(s))(s-x(s)/c),
$$
uniformly on $[0,T]$, where $v = 1-\dot{x}/c$. And the same is true, replacing $F$ by $\displaystyle\frac{\partial F}{\partial \xi}$ and $\bar{F}$ by $\displaystyle \frac{\partial \bar{F}}{\partial \xi}$.

From these properties, it is easy to deduce that
$$\epsilon\int_0 ^t (\dot{x}^\epsilon \frac{\partial F}{\partial X}
+\dot{y}^\epsilon \frac{\partial F}{\partial Y})ds \to 
\int_0 ^t (s-x/c)(\dot{x} \frac{\partial \bar{F}}{\partial X}
+\dot{y} \frac{\partial \bar{F}}{\partial Y})ds \;\hbox{as  }\epsilon \to 0\; ,$$
for any $t \in [0,T]$.

On the other hand, $\dot{v^\epsilon} = -\ddot{x}^\epsilon/c$ converges in the $L^\infty$ weak-$*$ topology to $\dot{v}$ and therefore
$$ \epsilon\int_0 ^t \dot{v^\epsilon} \frac{\partial F}{\partial V} ds \to \int_0 ^t (s-x/c)\dot{v} \frac{\partial \bar{F}}{\partial V} ds \; ,$$
for any $t \in [0,T]$.

Gathering all these informations, we finally obtain
\begin{align}
\dot{x}(t)-v_0=&
(t-x(t)/c)\bar{F}(
1-\dot{x}(t)/c,x(t),y(t)) \nonumber \\
&-\int_0 ^t (s-x/c)(\dot{v} \frac{\partial \bar{F}}{\partial V} ds 
+\dot{x} \frac{\partial F}{\partial X}
+\dot{y} \frac{\partial F}{\partial Y})ds \nonumber  \\
&
-\int_0 ^t k(s)\dot{x}(s) ds \label{equ42}
\end{align}

The right-hand side being $C^1$ in $t$, we deduce that $x$ is a $C^2$-function and derivating the both side of (\ref{equ42}), we have the equation 
 \begin{equation*}
\ddot{x}(t)=(1-\dot{x}(t)/c)\bar{F}(
1-\dot{x}(t)/c,x(t),y(t))
- k(t)\dot{x}(t).
\end{equation*}

\end{proof}
\section{On the Effects of Intestinal Villi}\label{ch3}

As mentioned in the introduction, in the 1-d model of digestion presented in \cite{TLGLB}, we take into account the absorption effect by a simple absorption term through a Michaelis-Menten type nonlinearity and this can be assumed unreasonable when compared to the complexity of the involved phenomena.  The same can be said for the enzymatic breakdown by the brush-border enzymes. Consequently the first aim of this section consists in giving some rigorous justification of these choices.

Our effort is therefore to find an appropriate system of equations describing the different effects of the structure and the spatial distribution of intestinal villi on these key phenomena of digestion. Therefore we introduce a 3-d toy model which takes into account the complex geometry of the small intestine as well as all these boundary effects, but this implies unavoidable simplifications on the transport process.

We start by a short presentation of the small intestine anatomy followed by introducing the three dimensional toy model of digestion.

A large number of villi and micro-villi are present on the surface of the small intestine. 
 Their role is to enlarge the digestive and absorptive area in the small intestine. They increase the area of the small intestine at least 500 times (\cite{Randall1997}). The absorptive surface of the villi contains the brush border enzymes which are responsible of the final step of degradation (surfacic degradation) for some nutrients. This increase is therefore a key issue in the process of nutrients degradation and absorption.(\cite{Keener2008}). 

These finger like villi are covered by epithelial cells. They consist of absorptive, goblet and entero-endocrine cells. The epithelial cells are produced in crypts, they migrate and become mature from the crypts to the tips of the villi(\cite{YAMAUCHI(2007)}).  More precisely, the absorption rate is also proportional to the distance of each of the villi from its tip.

A microscopic observation of the small intestine surface is necessary in order to give realistic absorption and degradation shapes. The spatial aspect of absorption related to the distribution of villi and their absorption capacity is often neglected in modelling of digestion. In these models, absorption and degradation are modelled by a constant rate or a Michaelis-Menten process (see \cite{TLGLB}, Logan \cite{LOGAN2002}, ...). 

As shown in figure (\ref{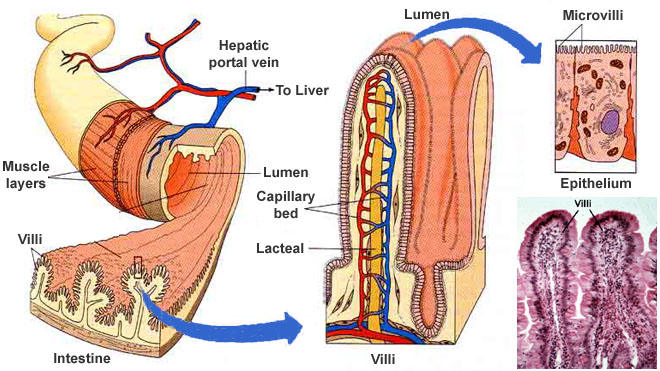}) the size of the period is small compared to the size of the unfold small intestine which is around $18$ meters. We consider, for the sake of simplicity, that the villi are distributed periodically in the inner surface of the small intestine.

 \begin{center}
  \includegraphics[width=\textwidth]{smintest.jpg}
\figcaption{The different scales on intestinal anatomy relevant to our model. }%
 \label{smintest.jpg}%
 \end{center}

\vspace{1cm}
In this section, we seek a macroscopic description of digestion in the small intestine by taking into account all the effects of the presence of the villi in microscopic scale.  Our approach is based on an asymptotic analysis, as $\epsilon$ goes to zero. The absorption rate of the limit problem is said to be the homogenized absorption rate.

\subsection{Position of problem}\label{3.2}

The small intestine is assumed to be an axisymmetric cylindrical tube with a rapidly varying cross section.
In order to describe it, we first introduce an axisymmetric, smooth domain $\Omega$ which is confined in a cylinder of radius $r >0$. More precisely, we assume
\begin{equation*}
 \{(x_1,x_2,x_3)\in \R^3 \, \mid \, x_2=x_3=0\} \subset  \Omega \subset \{(x_1,x_2,x_3)\in \R^3 \, \mid \, x_2^2+x_3^2=r^2 \}\, .
\end{equation*}
In addition, we assume that $\Omega$ is periodic in the $x_1$-direction (say $1$-periodic), namely
$(x_1 + 1,x_2,x_3)\in \Omega$ if $(x_1,x_2,x_3)\in \Omega$.

The small intestine is represented, for some $0<\epsilon \ll 1$ by the domain $\Omega _\epsilon$ given by
\begin{equation}\label{equ32}
 \Omega _\epsilon = \epsilon \Omega \cap \{x_1 \geq 0\}
\end{equation}
The figure \ref{villi} is a simple representation of this domain :
\begin{center}
    \mbox{\includegraphics[scale=0.72]{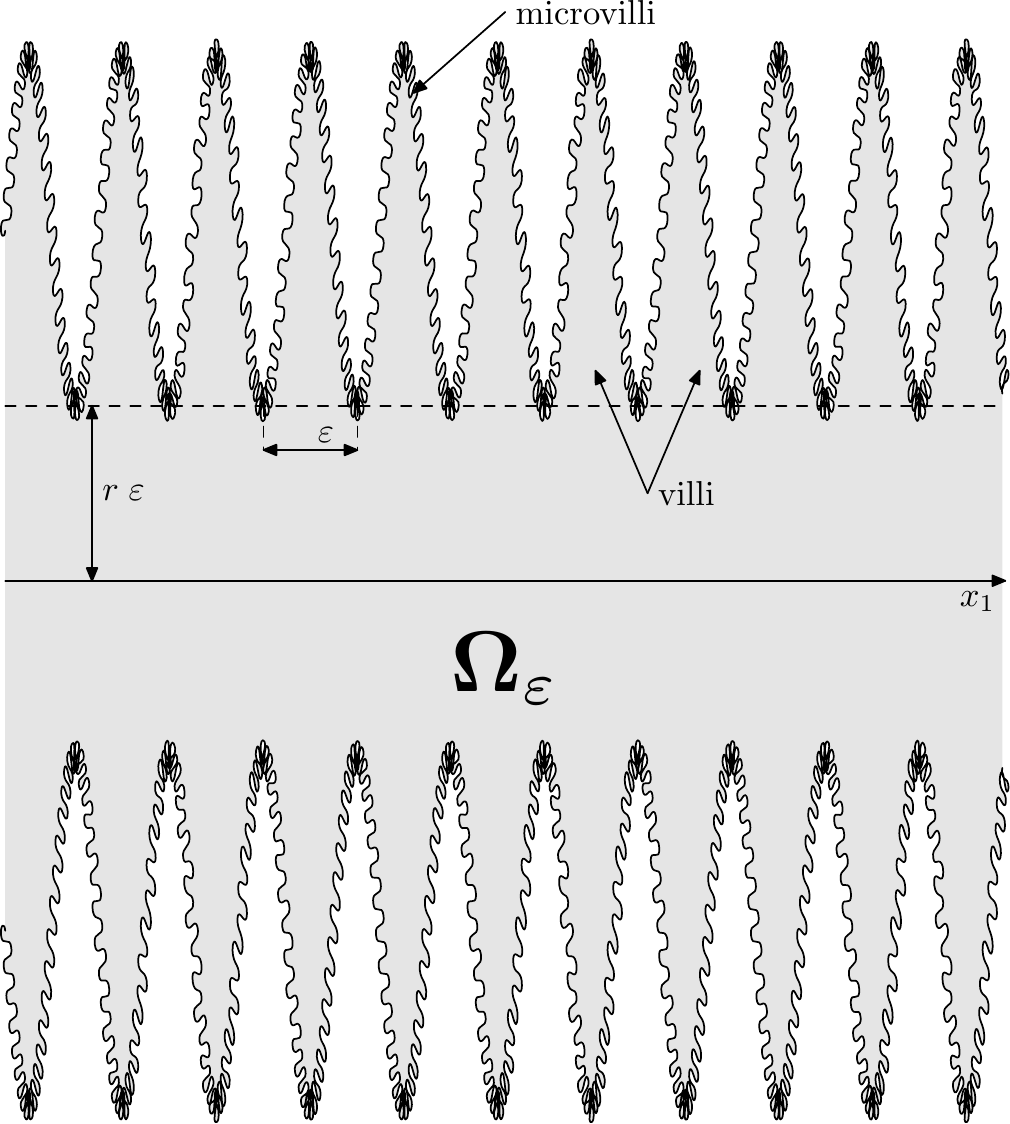}}
\figcaption{A simple example of the domain $\Omega _\e$. The oscillations on the on the villi represents the microvilli.}%
\label{villi}
\end{center}

In this definition, the small intestine has an infinite length. However this assumption is not a real restriction, since we focus on the local absorption-degradation processes. Moreover, the $x_1=0$ part of the boundary corresponds to the pylorus and $\epsilon \partial \Omega$ to the villi. It is worth pointing out that $ \Omega _\epsilon$ is $\epsilon$-periodic in the $x_1$ direction, the parameter $\epsilon$ characterizes the distance between the villi and thus it is natural to assume it to be very small.

A simple example of $\Omega _ \epsilon$ in cylindrical coordinates, can be the following 
$$\Omega_{\epsilon}=\{(r,\theta,z)\, \,\text{s.t} \,\,\mid r\mid \leq \epsilon+\epsilon \psi (z/\epsilon,\theta), \, \, z\geq 0 ,\, \, \theta\in [0,2\pi]\}$$ 
where $z$ plays here the role of $x_1$ and $\psi (z,\theta)$ is a $1$-periodic function of $z$.

We introduce two functions $\ue, \ve : \Omega_{\epsilon} \times [0,T]  \to \R$ for describing the evolution of the concentration of feedstuffs in the small intestine. For $x\in \Omega_\epsilon$ and $t\in [0,T]$, $\ve (x,t)$ denotes the concentration of the large feedstuffs molecules which are transformed into absorbable nutrients after different enzymatic reactions. The quantity $\ue(x,t)$ represents the concentration of produced nutrients at position $x$ at each time $t$.

The evolution of substrates $\ue$ and $\ve$ in the intestinal lumen is due to $(i)$ their diffusion by Fick's law, $(ii)$ their propagation through intestinal lumen  by a given velocity coming from the peristaltic waves and $(iii)$ the enzymatic reactions which transform $\ve$ to $\ue$  both inside the intestinal lumen but also on the intestinal wall by the brush-border enzymes.  When these reactions take place in the intestinal lumen, we call them volumic transformation, while we talk about surfacic transformation when they take place on the villi.

The rate of the volumic reactions depends on the concentration of feedstuffs and also enzymes activity at time $t$ and at $x$, namely $\zeta(x,t)$, where $\zeta : [0,\infty) \times [0,T] \to \R $ is a continuous, positive and bounded function. There is a limitation in the transformation which is described by $\varphi : \R \to \R$, which is a bounded, increasing and Lipschitz continuous function such that $\varphi(s)=0$ if $s\leq 0$. These assumptions on $\zeta$ and $\varphi$ are denoted by (T1) in the sequel.

 Taking into account the three above-mentioned phenomena, the equation for the evolution of concentration of the non-absorbable feedstuffs molecules in the intestinal lumen reads
\begin{equation}\label{equfeed}
    \dfrac{\partial v^\epsilon}{\partial t}=\omega_\epsilon \Delta v^\epsilon-c(x_1,x/\epsilon,t)D v^\epsilon-\zeta(x_1,t)\varphi(v^\epsilon) \quad \text{in} \quad \Omega_\epsilon \times (0,T)
 \end{equation}
 while for the absorbable nutrients, we have
\begin{equation}\label{feedtonut1}
  \dfrac{\partial u^\epsilon}{\partial t}=\chi_\epsilon \Delta u^\epsilon-c(x_1,x/\epsilon,t)D u^\epsilon+  \zeta(x_1,t)\varphi(v^\epsilon)\quad \text{in} \quad \Omega_\epsilon \times (0,T).
\end{equation}

 The first terms of the right-hand-sides of the above equations, where $\Delta$ denotes the usual Laplacian\footnote{If $\phi$ is a smooth function, $\displaystyle \Delta \phi = \frac{\partial^2 \phi}{\partial x_1^2}+\frac{\partial^2 \phi}{\partial x_2^2}+\frac{\partial^2 \phi}{\partial x_3^2}$}, are diffusion terms.
The diffusion coefficients of large molecules of feedstuffs and small molecules of nutrients are denoted by $\omega_\epsilon$ and $\chi_\epsilon$ respectively.

It is shown that, for a fixed temperature, the diffusion coefficient $d$ is inversely proportional to the molecular weight, to be more precise for a spherical molecule we have 
$$d=\frac{kT}{3\mu}(\frac{\rho}{6\pi M})^{1/3}$$
in which $k$ is Boltzmann constant, $T$ is the intestinal temperature, $\mu$ the viscosity of the the intestinal liquid, $\rho $ the molecule density and $M$ the molecular mass. For fixed $T$ and $\mu$, this constant is very small because of the very small value of $kM^{-1/3}$ \cite{Keener2008}.

 For reasons explained in section \ref{rr}, we assume that
$$ \omega_\epsilon :=\epsilon \omega  \quad\hbox{and}\quad \chi_\epsilon:=\epsilon \chi \; ,$$ for some constants $\omega,\chi>0$. Since the nutrients molecules are smaller than feedstuffs particles, we also have $\omega \leq\chi$. 
 The second terms of the right-hand sides are transport terms. The $C^1$-function $c: [0,+\infty) \times \Omega \times [0,T] \to \R^3$ is modelling the velocity of substrates which comes from the peristaltic waves. The effect of the peristaltic waves is known to depend on the position in the small intestine and on time, this justifies the dependence of $c(x_1,X,t)$ on $x_1$ and $t$, while the dependence on $X=(X_1,X_2,X_3)$ takes into account the local effects at a lower scale.

A priori the diffusion of bolus is small compared to its velocity through the small intestine and therefore $\omega_{\e}$ and $\chi_\e$ are expected to be smaller than $c(x_1,X,t)$. 

 We assume the function $c$ to satisfy the following properties :\\
{\bf (C1)} The function $c(x_1,X,t)$ is a Lipschitz continuous function which is $1$-periodic in $X_1$ and, if $e_1=(1,0,0)$, then, for any $x_1 \in [0,+\infty)$, $X \in \Omega$ and $t \in [0,T]$,
$$ c(x_1,X,t)\cdot e_1 \geq 0\quad \hbox{and}\quad  \int_P c(x_1,X,t)dX >0\; ,$$
where $P$ is a period in $\Omega$, say $P:=\{X\in \Omega\ ;\ \ 0\leq X_1 \leq 1\}$.

In addition to the regularity properties of $c$, this assumption means that the effect of the peristaltic waves is to move ahead the bolus in the small intestine.\\
{\bf (C2)} For any $x_1 \in [0,+\infty)$, $X \in \Omega$ and $t \in [0,T]$, $\div_X(c)=0$ where $\div_X$ denotes the divergence operator in the $X$-variable only. \\
This second assumption is justified by the incompressibility of the bolus at the microscopic level.\\
{\bf (C3)} For any $x_1 \in [0,+\infty)$, $X \in \partial \Omega$ and $t \in [0,T]$, $c(x_1,X,t)\cdot N(X) =0$, where $N(X)$ denotes the outward, unit normal to $\partial \Omega$ at $X$.\\

This last assumption means that the velocity vector is always tangent to the boundary. It is worth pointing out that, if $X=x/\epsilon$ then $N(X) = n(x)$, therefore it is true both for $X$ in $\Omega$ and for $x$ in $\Omega_\epsilon$. As a consequence of this property, the nutrients reach the boundary only because of the diffusion effects.

Once they reach the boundary, the large particles of feedstuffs can change of chemical structure because of the presence of brush-border enzymes. As we already mentioned above, this effect is called surfacic degradation of feedstuffs and the result is the production of the smaller absorbable molecules of nutrients $\ue$. We assume moreover that a portion $0\leq \beta < 1$ of these nutrients is absorbed instantaneously while the remaining part ($\alpha:=1-\beta$) diffuses in the small intestine. The surfacic degradation is modelled by the Neumann boundary condition
  \begin{equation}\label{equfeedb} 
\omega \frac{\partial \ve}{\partial n}=-\varrho(x_1,X)\ve\quad \text{on}\,\,\partial \Omega_{\epsilon}\times (0,T)
\end{equation}
 where, $\varrho$ is a continuous, positive and $X_1$-periodic function which represents the rate of surfacic degradation.

 On the boundary of the small intestine, there are two main effects for the nutrients $\ue$. We already describe the first one which is a production of nutrients by the surfacic degradation. The second one is the active and passive absorption of nutrients, namely their transport across the intestinal wall to the blood circulation. An active process requires the expenditure of energy, while a passive process results from the inherent, random movement of molecules \cite{Keener2008}. These different categories of absorption as well as the production of $\ue$ from $\ve$ on the boundary construct the boundary condition of Equation (\ref{equfeed})
\begin{equation}\label{equnutb} 
\chi \dfrac{\partial u^\epsilon}{\partial n} =- \eta_p(x_1,x/\epsilon) u^\epsilon-\eta_a(x_1,x/\epsilon,t) g_a(u^\epsilon)+\frac{\alpha}{\omega} \varrho(x_1,x/\epsilon)\ve.
\end{equation} 

The functions $\eta_p$ and $\eta_a$ denote respectively the passive and active absorption rates. Both of them depend on the global position in the small intestine $x_1$ and the local one $x/\epsilon$, by which we take into account the effect of the special physiology of the villi on the absorption rate which has been described at the beginning of this section. The dependence in time in the active absorption $\eta_a$, describes the presence of energy at time $t$. The function $ g_a$ governs the active absorption and depends on the nutrients categories. Typically, it is assumed to be the Michaelis Menten and therefore, it is a bounded, continuous, increasing function. 

We formulate the key assumptions on the functions $\eta_p, \eta_a$ and $g_a$\\
{\bf (T2)} The functions $\eta_p (x_1,X), \eta_a(x_1,X, t)$ are bounded, continuous, positive, $1$-periodic functions in $X_1$, and the function $g_a$ is a bounded, continuous, increasing function with $g_a(s) = 0$ if $s\leq 0$. Moreover, there exists $\underline{\eta}>0$ such that $\eta_p (x_1,X)\geq \underline{\eta}$ for any $x_1 \in [0,+\infty)$, $X \in \Omega$.\\

Finally, we complement the equations with the initial conditions
\begin{equation}\label{emptySI}
 u^{\epsilon}(x,0)=0, \quad v^\e(x,0)=0  \quad\text{in  } \Omega_\epsilon ,
\end{equation}
which means that the small intestine is empty at time $t=0$ and by a Dirichlet boundary condition at $x_1=0$, modelling the gastric emptying, namely
\begin{align}
  v^{\epsilon}(x,t)=v_0(t)  &\quad \text{for  } x_1 = 0, t \in (0,T)\; \label{Pylfeed}\\
  u^{\epsilon}(x,t)=u_0(t)& \quad \text{for  } x_1 = 0, t \in (0,T)\;\label{Pylnut} ,
\end{align}
where $u_0$ and $v_0$ are bounded continuous functions on $[0,T]$ with $u_0(0)=0$ and  $v_0(0)=0$.

\subsection{Formal asymptotic}

 In order to study the limit as $\epsilon \to 0$ of the system (\ref{equfeed})-(\ref{Pylnut}),
we first argue formally : we consider the following expansions (called ansatz) for the solutions $u^\epsilon$ and $v^\e$
\begin{align}
    & u^\epsilon(x,t)
    =u(x_1,t)+\epsilon u_1(x_1,\dfrac{x}{\epsilon},t)
    +o({\epsilon}) \label{ansatzu}\\
    & \ve(x,t)
    =v(x_1,t)+\epsilon v_1(x_1,\dfrac{x}{\epsilon},t)
    +o({\epsilon})  \label{ansatzv}
\end{align}
where $u_1(x_1,\dfrac{x}{\epsilon},t)$ and $v_1(x_1,\dfrac{x}{\epsilon},t)$ are 1-periodic functions in second variable.

From now on, in order to simplify the notations,  we systematically denote by $X$ the fast variable $x/\epsilon$. On the other hand, the above system can be decoupled and we can first study the asymptotics of $\ve$, namely only the initial-boundary value problem (\ref{equfeed})-(\ref{equfeedb})-(\ref{emptySI})-(\ref{Pylfeed}) and then use the result for studying the behavior of $\ue$ through (\ref{feedtonut1})-(\ref{equnutb})-(\ref{emptySI})-(\ref{Pylnut}). Since we use the same methods in both cases to obtain the homogenization results, we present the details only for the equation of nutrients $\ue$ while we only give the results for $\ve$.

We first plug these expressions of $v^\e$ and $u^\epsilon$ into (\ref{feedtonut1}), and then examine the higher order terms in $\e$. We find
\begin{equation}
    \dfrac{\partial u}{\partial t}= \chi_{\epsilon}(\dfrac{\partial^2 u}{\partial x_1 ^2} +\dfrac{1}{\epsilon}\Delta_{X} u_1)-c(x_1,X,t)(\dfrac{\partial u}{\partial x_1}e_1+D_X  u_1)+\zeta(x_1,t)\varphi(v)+o(1)
\end{equation}
At this stage, we notice that the relevant choice for observing the effects of villi is indeed $\chi_\epsilon =\epsilon\chi$, for some positive constant $\chi$. With this choice, we obtain
\begin{equation}\label{equ13}
      \dfrac{\partial u}{\partial t}= \chi \Delta_{X} u_1-c(x_1,X,t)(\dfrac{\partial u}{\partial x_1}e_1+D_X u_1)+\zeta(x_1,t)\varphi(v)+o(1).
 \end{equation}
The equation for the first corrector $u_1$ (the ``cell problem'') is an equation in the fast variable $X$, i.e. for the functions $X \mapsto u_1(x_1,X,t)$, $x_1,t$ playing the role of parameters. Setting
    $$p:=\dfrac{\partial u}{\partial x_1}(x_1,t)e_1 \quad, \quad \lambda:=-\dfrac{\partial u}{\partial t}(x_1,t) \quad\hbox{and} \quad \delta :=\zeta(x_1,t)\varphi(v),$$ 
and substituting $p$ and $\lambda$ in (\ref{equ13}), we obtain the equation on $\Omega$ 
 \begin{equation}\label{CP1}
    -\chi \Delta u_1+c(x_1,X,t)[p+D_Xu_1]=\lambda +\delta\quad \text{in}\,\, \Omega .
\end{equation}

We argue in the same way for the boundary condition : plugging (\ref{ansatzu}) and (\ref{ansatzv}) into ($\ref{equnutb})$, we obtain
\begin{equation}
    \chi(\dfrac{\partial u}{\partial x_1}e_1 +D_X u_1).n
    =-\left(\eta_p(x_1,X)u+\eta_a(x_1,X,t)g_a(u)-\frac{\alpha}{\omega}\varrho(x_1,X)v\right)+ o(1)\;  .
\end{equation}
Using the introduced notations and recalling that $N(X)=n(x)$, the above equation gives
\begin{equation}\label{equ38}
  (p+D_Xu_1).N
  =-\frac{1}{\chi}\left(\eta_p(x_1,X)u+\eta_a(x_1,X,t)g_a(u)-\frac{\alpha}{\omega}\varrho(x_1,X)v\right)+ o(1)\; .
\end{equation}
 Introducing the notations $\mu:=u(x_1,t)$ and $\nu=v(x_1,t)$ and
$$
\Theta(x_1,X,t,u,v):=
\eta_p(x_1,X)u+\eta_a(x_1,X,t)g_a(u)-\frac{\alpha}{\omega}\varrho(x_1,X)v\; ,
$$
the complete cell problem reads 
\begin{equation}\label{cellpb}
    \left\{
    \begin{aligned}
    -\chi \Delta u_1+c(x_1,X,t)[p+D_Xu_1]=\lambda + \delta \,\, &\,\, \text{in}\,\, \Omega \\
    (p+D_Xu_1)\cdot N =-\frac{1}{\chi} \Theta(x_1,X,t,\mu,\nu) \,\, & \,\,\text{on} \,\,\partial \Omega 
    \end{aligned}\right.
\end{equation}
We assume that this problem has indeed a smooth solution $u_1$ which is $1$-periodic in $X_1$.
Recalling that $\Omega$ is $1$ periodic in the $X_1$ direction and integrating (\ref{cellpb}) over a period $P$ (remarking also that $\Delta_X u_1 = \Delta_X(u_1+p \cdot X)$), we obtain
\begin{equation}\label{equ15}
    (\lambda+\delta ) \mes{P}=\chi  \int_P -\Delta_X(u_1+p \cdot X)dX +\int_P c(x_1,X,t)[p+D_X u_1])dX
\end{equation}
where $\mes{P}$ denotes the Lebesgue measure of $P$.
By using Green Formula 
\begin{equation*}
    -\chi  \int_P \Delta_X(u_1+p \cdot X)dX=-\chi \int_{\partial P }(D_X u_1+p).\tilde{N} d\sigma
\end{equation*}
where $\tilde{N}$ denotes the outward, unit normal to $\partial P$ and where
  $$\partial P = (\partial P \cap \partial\Omega)\cup (\partial P\cap \Omega)\; .$$
We first point out that, because of the periodicity of $u_1$ and the opposite orientation of the normal vector on both side of the cell
\begin{equation}\label{equ39}
  \chi \int_{(\partial P\cap \Omega)}(D_X u_1+p) \cdot \tilde{N} d\sigma=0.
\end{equation}
On the other hand, recalling the boundary condition of (\ref{cellpb})
\begin{equation*}
    -\chi \int_{\partial P \cap \partial\Omega }(D_X u_1+p) .\tilde{N} d\sigma= \int_{\partial P  \cap \partial\Omega} \, \Theta(x_1,X,t,\mu,\nu) d\sigma.
\end{equation*}

Next we consider the $c$-term : by integration by parts
  \begin{equation}
    \int_P c(x_1,X,t)D_X u_1dX =\int_{\partial P} u_1c(x_1,X,t).\tilde{N} d\sigma - \int_P u_1\div_X (c) dX. \nonumber
  \end{equation}
Because of {\bf (C2)}, the last integral of the right-hand side vanishes, while, for the first one, we use similar argument as above :  because of the periodicity properties of the velocity function $c$, the integral over $\partial P\cap \Omega$ is $0$ (the same reasons as for (\ref{equ39})) and by {\bf (C3)}, it is also the case for the integral over $\partial P \cap \partial\Omega$.

Gathering these informations, inserting them in (\ref{equ15}) and recalling the definition of $\Theta$, one gets
\begin{align}\label{equ16}
  (\lambda +\delta) \mes{P}=\int_{\partial P  \cap \partial\Omega} [\eta_p(x_1,X)\mu+&\eta_a(x_1,X,t)g_a(\mu)-\frac{\alpha}{\omega}\varrho(x_1,X)\nu  ] d\sigma \nonumber\\+&p \cdot \int_P c(x_1,X,t)dX .
\end{align}

In order to obtain the homogenized equation, we introduce
\begin{align}
  &\bar{c}(x_1,t)=\dfrac{1}{\mes{P}} \int_P c(x_1,X,t)dX, \nonumber \\
  &\bar{\eta}_p(x_1)=\dfrac{1}{\mes{\partial P \cap \partial \Omega }}\int_{\partial P \cap \partial \Omega }\eta_p(x_1,X)d\sigma \label{equ24}\\
  &\bar{\eta}_a(x_1,t)=\dfrac{1}{\mes{\partial P \cap \partial \Omega }}\int_{\partial P \cap \partial \Omega }\eta_a(x_1,X,t)d\sigma \nonumber\\
  &\bar{\varrho}(x_1)=\dfrac{1}{\mes{\partial P \cap \partial \Omega }}\int_{\partial P \cap \partial \Omega }\varrho(x_1,X )d\sigma \nonumber
\end{align}
where $\mes{\partial P \cap \partial \Omega}$ denotes the area of the surface $\partial P \cap \partial \Omega$. With the notation
$$
\bar{\Theta}(x_1,t,u,v):=
\bar{\eta}_p(x_1,X)u+\bar{\eta}_a(x_1,X,t)g_a(u)+\frac{\alpha}{\omega}\bar{\varrho}(x_1,X)v\; ,
$$
$$
\ratSV : = \frac{\mes{\partial P \cap \partial \Omega}}{\mes{P}}\; ,
$$
we get
\begin{equation}
 \lambda= \ratSV\bar{\Theta}(x_1,t,\mu,\nu)
+\bar{c}(x_1,t) \cdot p-\delta \label{equ40},
\end{equation}
The one dimensional averaged equation of transport and absorption of nutrients is thus obtained by inserting the value of $\lambda$ and $p$ in the equation (\ref{equ40})
\begin{equation}\label{nuthom}
  \dfrac{\partial u}{\partial t}+\bar{c}(x_1,t)\cdot e_1 \dfrac{\partial u}{\partial x_1}=
\zeta(x_1,t)\varphi(v)-\ratSV \bar{\Theta}(x_1,t,u,v)
\end{equation}
The term $\displaystyle \ratSV \bar{\Theta}(x_1,t,u,v)$ represents the global result of the different phenomena on the boundary of the small intestine : production of nutrients by surfacic degradation, active and passive absorption. The interesting feature in this term comes from the coefficient $\displaystyle \ratSV$ which measures the ratio between the large surface of the villi compared to the relatively small volume of each cell. It therefore describes the effect of the geometry of the villi on the absorption and degradation processes. 

The term $\displaystyle \ratSV[\bar{\eta}_p(x_1)u+\bar{\eta}_a(x_1,t)g(u)]$ gives an averaged value of absorption by intestinal wall, which takes into account the effect of villi folds  as well as the differences between passive and active absorption. 

In the same way as for the nutrients $u^\e$, we may obtain the one dimensional homogenized equation for feedstuffs $v^\e$
  \begin{equation}\label{feedhom}
  \dfrac{\partial v}{\partial t}+\bar{c}(x_1,t)\cdot e_1 \dfrac{\partial v}{\partial x_1}=
-\zeta(x_1,t)\varphi(v)-\ratSV\frac{1}{\omega}\bar{\varrho}(x_1)v.
  \end{equation}

In order to compare the homogenized equations (\ref{nuthom})-(\ref{feedhom}) with the models presented in \cite{TLGLB}, we recall  that, roughly speaking, in these models, the bolus is identified as a  cylinder of fixed length and variable radius $r$, composed of a single  feedstuff $A$ which is transformed into an absorbable nutrient $B$  through different types of enzymatic degradations. In fact, the main  model is more sophisticated since $A$ and $B$ can appear under several  forms (typically for $A$ a solubilized and a non-solubilized form).

Two degradation mechanisms are taken into account :  a ``volumic'' one  taking place inside the bolus and resulting from the action of  pancreatic and gastric enzymes and a ``surfacic'' one taking place on  the villi and resulting from the action of the brush-border enzymes.  Then, once the absorbable nutrient $B$ reaches the surface of bolus,  hence the intestinal wall, the absorption is ensured by a Michaelis- Menten mechanism. Therefore, even if the above $3$-d model is very  simplified, the functions $v$ and $A$ have the same nature and  represent the large particles of feedstuffs, as well as the functions  $u$ and $B$ represent the absorbable nutrients. Furthermore the $3$-d  model described the same phenomena, at least on the boundary.

Therefore, as we already mentioned it in the introduction, the above  homogenization process  equations (\ref{nuthom})-(\ref{feedhom})  justifies the rather simple form of the equations presented  in  \cite{TLGLB} : as long as we are just interested in ``macroscopic''  phenomena, it is reasonnable to describe the effects of the complex  geometry of the villi, the different types of degradation and the  absorption process by these odes
\begin{remark} In the above analysis, the effects of villi is summarized and measured by the (a priori large) $\displaystyle \ratSV$-coefficient which described the consequences of their particular finger-like geometry. It is worth pointing out that this type of analysis can be used as well to understand the effects of villi in the intestinal tract but also the effects of micro-villi inside the villi.
\end{remark}

\subsection{The Rigorous Result and Proof}\label{rr}
We are now in position to state the rigorous result.
\begin{theorem}\label{conv}Assume that $\Omega$ is a $C^2$-domain satisfying the properties described in Section~\ref{3.2}, that {\bf (C1)}-{\bf (C3)}, {\bf (T1)-(T2)} holds and that $u_0, v_0$ are continuous functions such that $u_0(0)=v_0(0) =0$.
Then the sequences $(\ue,\ve)_{\epsilon}$ converge locally uniformly, as $\epsilon \rightarrow 0$, to the unique (viscosity) solution $(u,v)$ of the system
\begin{equation}\label{equ18}
 \left\{
\begin{aligned}
  \dfrac{\partial u}{\partial t}+\bar{c}(x_1,t)\cdot e_1 \dfrac{\partial u}{\partial x_1}=&
\zeta(x_1,t)\varphi(v)-\ratSV \bar{\Theta}(x_1,t,u,v) \; 
\text{  in}\,\, Q_T \\
 \dfrac{\partial v}{\partial t}+\bar{c}(x_1,t)\cdot e_1 \dfrac{\partial v}{\partial x_1}=&
-\zeta(x_1,t)\varphi(v)-\ratSV\frac{1}{\omega}\bar{\varrho}(x_1)v
 \; 
\text{  in}\,\, Q_T \\
u(0,t)=u_0(t) \;\hbox{ and }&\; v(0,t)=v_0(t)\quad \quad \text{on } \,\,\partial Q_T\\
u(x_1,0)=v(x_1,0)=& \; 0 \quad \quad \text{in  } [0,+\infty)
\end{aligned}\right.
\end{equation}
\noindent where $Q_T=(0,+\infty) \times (0,T)$ and $\partial Q_T =\{x_1 = 0, t \in (0,T)\}$.
\end{theorem}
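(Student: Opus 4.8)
The plan is to combine the method of half-relaxed limits with Evans' perturbed test function method, using in an essential way that the system decouples: one first passes to the limit in the initial-boundary value problem (\ref{equfeed})-(\ref{equfeedb}) for $\ve$, and then, knowing that $\ve$ converges to some $v$, in the problem (\ref{feedtonut1})-(\ref{equnutb}) for $\ue$. First I would establish $L^\infty$-bounds on $(\ue,\ve)$ uniform in $\e$: by the maximum principle, using (T1) (so $\varphi\geq0$ and $\varphi(s)=0$ for $s\leq0$), the sign of the surfacic-degradation datum in (\ref{equfeedb}) and $v_0(0)=0$, one gets $0\leq\ve\leq\|v_0\|_\infty$; then, $\ve$ being bounded, comparing $\ue$ with the affine supersolution $t\mapsto A+C_0t$, where $C_0:=\|\zeta\|_\infty\|\varphi\|_\infty$ dominates the volumic source and $A$ is large enough to dominate $\|u_0\|_\infty$ and, using $\eta_p\geq\underline\eta$ from (T2), the boundary source $\frac{\alpha}{\omega}\varrho\ve$, yields $0\leq\ue\leq A+C_0T$ on $[0,T]$. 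Since the domains $\Omega_\e$ shrink onto the axis $\R e_1$ as $\e\to0$, the upper and lower half-relaxed limits $\bar u,\underline u$ of $\ue$ (and $\bar v,\underline v$ of $\ve$) are then well defined, finite, and are necessarily functions of $(x_1,t)\in[0,+\infty)\times[0,T]$ only; the goal becomes to prove that $\bar v,\bar u$ are viscosity subsolutions and $\underline v,\underline u$ supersolutions of the system (\ref{equ18}).

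The heart of the matter is the perturbed test function argument, which I describe for the subsolution property of $\bar u$; that of $\bar v$, and the supersolution statements, are analogous (the $v$-equation being even simpler, its boundary term being linear). The cell problem is exactly (\ref{cellpb}): for frozen parameters $(x_1,t)$ and data $p,\mu,\nu$ it is a \emph{linear} Neumann problem on the $1$-periodic $C^2$-domain $\Omega$ with smooth coefficients, which by the Fredholm alternative admits a bounded $X_1$-periodic solution (unique up to an additive constant) if and only if one scalar compatibility relation holds; integrating over a period and using (C2)-(C3) together with the periodicity of $c$ and of the corrector, exactly as in the formal computation of Section~\ref{3.2}, that relation is precisely (\ref{equ40}), which defines the effective term $\ratSV\bar\Theta$ in (\ref{nuthom}). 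Elliptic regularity makes the corrector $C^2$, hence an admissible test function. Now let $\phi=\phi(x_1,t)$ be smooth and let $(\bar x_1,\bar t)$, with $\bar x_1>0$ and $\bar t>0$, be a strict local maximum of $\bar u-\phi$, normalised so that $\phi(\bar x_1,\bar t)=\bar u(\bar x_1,\bar t)=:\mu$, and suppose for contradiction that the limiting subsolution inequality fails at $(\bar x_1,\bar t)$ by some $\theta>0$. I would then fix $\sigma>0$ with $\sigma\,\ratSV<\theta$, solve (\ref{cellpb}) with data frozen at $(\bar x_1,\bar t)$ and the boundary term $\Theta$ replaced by $\Theta-\sigma$ (still solvable, the compatible $\lambda$ being shifted by $-\sigma\,\ratSV$), call $w$ the resulting bounded $C^2$ corrector, and set $\phi^\e(x,t):=\phi(x_1,t)+\e\,w(x/\e)$, so $\phi^\e\to\phi$ uniformly. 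A direct computation then shows that, for $\e$ small, $\phi^\e$ is a \emph{strict supersolution} of the $\e$-problem: in the bulk the $O(1)$ terms cancel through the cell equation and there remains $\theta-\sigma\,\ratSV+o(1)>0$; on the highly oscillating lateral boundary $\e\partial\Omega$ — where $n(x)=N(x/\e)$ — the oscillating part of $\chi\partial_n\phi^\e$ cancels exactly the oscillating part of the Neumann datum of (\ref{equnutb}) (both carry the periodic coefficients $\eta_p,\eta_a,\varrho$ evaluated at the same point), leaving $\sigma+o(1)>0$, using in addition $\ve\to v$ locally uniformly and $\ue\to\mu$ near the maximum. Localising near $(\bar x_1,\bar t)$ on a compact set, the maximum of $\ue-\phi^\e$ is attained at a point $(x^\e,t^\e)$ with $x^\e\to(\bar x_1,0,0)$ and $t^\e\to\bar t$; it is not at $t^\e=0$ for $\e$ small (as $\bar t>0$), and whether it lies in $\Omega_\e$ or on $\e\partial\Omega$, the viscosity subsolution property of $\ue$ — the Neumann condition taken in the generalised viscosity sense — contradicts the strict supersolution property just established. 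For $\bar x_1=0$ one argues separately: since $\bar c(x_1,t)\cdot e_1=\frac{1}{\mes{P}}\int_Pc\cdot e_1\,dX>0$ by (C1), $x_1=0$ is a non-characteristic inflow boundary, so a standard barrier argument gives $\bar u(0,t)\leq u_0(t)$ (and $\underline u(0,t)\geq u_0(t)$), and the compatible initial data is assumed likewise.

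It remains to conclude by comparison. The $v$-equation in (\ref{equ18}) is a first-order transport-reaction equation with transport speed $\bar c\cdot e_1>0$ and zeroth-order term non-increasing in $v$ (since $\varphi$ is increasing and $\bar\varrho>0$), with the inflow Dirichlet and the initial data attained; it therefore satisfies a comparison principle, which forces $\bar v\leq\underline v$, and since trivially $\underline v\leq\bar v$ one gets $\bar v=\underline v=:v$, the unique solution, and $\ve\to v$ locally uniformly. Inserting this $v$ into the $u$-equation, whose zeroth-order term $\ratSV(\bar\eta_p u+\bar\eta_a g_a(u))$ is non-decreasing in $u$ (here one uses that $\ve\to v$ locally uniformly so that the volumic and boundary sources pass to the limit with their correct averages), the same comparison principle gives $\bar u=\underline u=:u$ and $\ue\to u$ locally uniformly, which is the assertion.

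The main obstacle is the perturbed test function step carried out in a genuine $3$-d to $1$-d regime: one must control the location of the maximum of $\ue-\phi^\e$ while the domain pinches onto a line and, above all, reconcile at a boundary maximum the generalised Neumann condition satisfied by $\ue$ on the oscillating wall with the boundary condition built into the corrector, extracting a strict sign — this is where the perturbation by $\sigma$ and the analysis of Neumann problems on periodic domains in the spirit of \cite{BDLS} enter, and it is also the mechanism producing the surface-to-volume factor $\ratSV$ in the limit. By contrast the a priori bounds, the solvability of the \emph{linear} cell problem, and the comparison principle for the one-dimensional limit system are all routine.
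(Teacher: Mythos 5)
Your proposal is correct and follows essentially the same route as the paper: uniform $L^\infty$ bounds by comparison, half-relaxed limits combined with Evans' perturbed test-function method, the cell problem (\ref{cellpb}) solved via the Fredholm alternative with the compatibility condition (\ref{equ40}), a small strictness parameter to handle touching points on the oscillating boundary, reduction of the generalized initial and Dirichlet conditions to classical ones using $\bar{c}\cdot e_1>0$, and a strong comparison result for the limit one-dimensional system. The only deviations are minor variants of the same mechanism: you bound $\ue$ from above by an affine-in-time barrier exploiting $\eta_p\geq\underline{\eta}$, where the paper builds a barrier from a function $d$ with $Dd\cdot N\leq -1$ whose $\epsilon$-scaled normal derivative absorbs the boundary source, and you obtain the strict sign at boundary maxima by shifting the Neumann datum by $\sigma$ in the corrector, whereas the paper shifts $\mu$ by $\gamma$ and invokes the strict monotonicity of $\Theta$ in $u$.
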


The averaged problem (\ref{equ18}) can be seen as a simplified version of the more complicated initial-boundary value problem (\ref{equfeed})-(\ref{Pylnut}) : it is clearly easier to compute the solution of (\ref{equ18}) than to take into account the complex geometry and boundary condition of (\ref{equfeed})-(\ref{Pylnut}).

\begin{proof}[Proof of Theorem~\ref{conv}]
Before providing the proof, we make some remarks about the existence and uniqueness of $\ue$ and $\ve$. The system \ref{equfeed})-(\ref{Pylnut}) is in fact decoupled and therefore we prove (by similar methods) the existence and uniqueness of $\ve$ and then of $\ue$.

The initial-boundary value problem (\ref{equfeed})-(\ref{equfeedb})-(\ref{emptySI})-(\ref{Pylfeed}) is a classical parabolic problem with Dirichlet and Neumann boundary conditions : it therefore admits smooth solutions. If one does not insist on proving the existence of smooth solutions, the existence and uniqueness of a viscosity solution of this problem can also be obtained by easier viscosity solutions arguments, using Perron's method (cf. \cite{Ishii(1987)}, \cite{CrandallN.S.}) and comparison results (\cite{Barles1999},  \cite{CrandallN.S.}). Of course, the result for (\ref{feedtonut1})-(\ref{equnutb})-(\ref{emptySI})-(\ref{Pylnut}) follows from the same arguments.

Applying the Maximum Principle (or a comparison result for viscosity solutions), it is easy to prove that $0\leq \ve (x,t) \leq ||v_0 ||_\infty $ in $\overline{\Omega}_\e \times [0,T]$ since $0$ and $||v_0 ||_\infty$ are respectively subsolution and supersolution of (\ref{feedtonut1})-(\ref{equnutb})-(\ref{emptySI})-(\ref{Pylnut}) . In particular, the $\ve$ 's are uniformly bounded. For the $\ue$, the situation is unfortunately a little bit more complicated : since $0$ is a subsolution of (\ref{feedtonut1})-(\ref{equnutb})-(\ref{emptySI})-(\ref{Pylnut}), we have $ \ue (x,t) \geq 0 $ on $\overline{\Omega}_\e \times [0,T]$ but it is not obvious at all to get an upper bound. For the time being, we assume that the $\ue$ 's are uniformly bounded and we will come back on this point at the end of the proof.

We provide the full convergence proof only in the case of the $u^\epsilon$'s, the one for the $v^\epsilon$ being obtained by similar and even simpler argument. In this proof, because of the decoupling of our system, we assume that we already know that the $v^\epsilon$'s are converging uniformly.

In order to prove the convergence of $u^\epsilon$ towards $u$, we use the standard method in such problems : we combine the half-relaxed limit method \cite{Barles1995,CrandallN.S.} with the Perturbed Test-Function method introduced by L. C.~Evans \cite{Evans1989}.  It is worth pointing anyway that the non-classical feature in our result and proof comes from the $3$-d to $1$-d passage to the limit and the change in the nature of the problem.

To this end, we introduce
$$\bar{u}(x,t)=\limsup_{\substack{\epsilon \to 0,y\to x\\ s\to t}}u^\epsilon(y,s)\quad, \quad\underline{u}(x,t)=\liminf_{\substack{\epsilon \to 0,y\to x \\ s\to t}}u^\epsilon(y,s) \; .$$
We have to prove that $\bar{u}$ is a subsolution of ($\ref{equ18})$ and $\underline{u}$ is a supersolution of $(\ref{equ18})$; since the proofs for the sub and supersolution cases are similar, we only present the arguments  for the subsolution case.

Let $\phi: [0,+\infty)\times [0,T] \rightarrow \R $ be a smooth test-function and $(x_1^0,t_0)$ be a strict maximum point of $\bar{u}-\phi$. In order to prove that $\bar{u}$ is a subsolution of $(\ref{equ18})$, we first consider the case when $x_1^0 >0,t_0>0$ where we have to prove
\begin{align}\label{equ20}
\dfrac{\partial\phi }{\partial t}(x_1^0,t_0)+& \bar{c}(x_1^0,t_0)\cdot e_1 \dfrac{\partial \phi }{\partial x_1}(x_1^0,t_0)\leq \nonumber \\& \zeta(x_1^0,t_0)\phi(v) -\ratSV\bar{\Theta}(x_1^0,t_0,u(x_1^0,t_0),v(x_1^0,t_0))
\end{align}

To apply the perturbed test-function method, we need the
\begin{lemma}\label{prop} The cell problem (\ref{cellpb}) has a $X_1$-periodic solution $u_1$ if and only if the parameters $\lambda, p , \mu, \nu, \delta, x_1, t$ satisfy Equation~(\ref{equ40}). Moreover this solution is unique up to an additive constant.
\end{lemma}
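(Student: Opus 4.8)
The plan is to read the cell problem (\ref{cellpb}) as a linear elliptic boundary value problem for the single function $X\mapsto u_1(x_1,X,t)$ on one period of $\Omega$, with $\lambda,p,\mu,\nu,\delta,x_1,t$ frozen, and to apply the Fredholm alternative; the solvability (compatibility) condition will be exactly (\ref{equ40}). First I would fix the period cell $P=\{X\in\Omega\ ;\ 0\leq X_1\leq 1\}$ and let $H^1_\sharp(P)$ be the space of $H^1(P)$ functions that are $1$-periodic in $X_1$, i.e. with matching traces on the two cross-sections forming $\partial P\cap\Omega$. Multiplying the first equation of (\ref{cellpb}) by a test function $z\in H^1_\sharp(P)$, integrating over $P$, and using Green's formula, the Neumann condition on $\partial P\cap\partial\Omega$, and the periodicity of $u_1$ and $z$ — which cancels the boundary contributions on the two cross-sections, where the outer normals are opposite — one obtains a weak formulation $a(u_1,z)=\ell(z)$ for all $z\in H^1_\sharp(P)$, where
\[
a(w,z)=\chi\int_P D_Xw\cdot D_Xz\,dX+\int_P \bigl(c(x_1,X,t)\cdot D_Xw\bigr)z\,dX
\]
is a bounded bilinear form on $H^1_\sharp(P)$ and $\ell$ is a bounded linear functional carrying all the data ($\lambda,\delta,p$ through a volume term; $\Theta(x_1,\cdot,t,\mu,\nu)$ and $p$ through a surface term on $\partial P\cap\partial\Omega$).

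The structural point is that by {\bf (C2)} ($\div_X c=0$) and {\bf (C3)} ($c\cdot N=0$ on $\partial\Omega$), together with periodicity, the drift term is skew-symmetric: $\int_P (c\cdot D_Xw)z=-\int_P (c\cdot D_Xz)w$ for all $w,z\in H^1_\sharp(P)$, so $a(w,w)=\chi\int_P|D_Xw|^2$. By the Poincar\'e--Wirtinger inequality on the connected bounded cell $P$, $a$ is coercive on the closed subspace $V=\{w\in H^1_\sharp(P)\ ;\ \int_P w=0\}$, hence Lax--Milgram yields a unique $u_1\in V$ with $a(u_1,z)=\ell(z)$ for all $z\in V$. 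Since $H^1_\sharp(P)=V\oplus\R$ and, again by {\bf (C2)}--{\bf (C3)}, $a(w,1)=\int_P c\cdot D_Xw=0$ for every $w$, this $u_1$ (adjusted by an arbitrary constant) solves the weak formulation on all of $H^1_\sharp(P)$ if and only if $\ell(1)=0$, and this is the only obstruction to existence. Uniqueness up to an additive constant is immediate: the difference $w$ of two solutions satisfies $a(w,z)=0$ for all $z$, and $z=w$ gives $\chi\int_P|D_Xw|^2=0$, so $D_Xw\equiv 0$ and $w$ is constant on the connected set $P$.

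It remains to identify $\ell(1)=0$ with (\ref{equ40}). Computing $\ell(1)$ and using the definitions (\ref{equ24}) of $\bar c,\bar\eta_p,\bar\eta_a,\bar\varrho$, the definition $\ratSV=\mes{\partial P\cap\partial\Omega}/\mes P$, and the elementary identity $\int_{\partial P\cap\partial\Omega}N\,d\sigma=0$ (apply the divergence theorem on $P$: the two cross-sectional contributions cancel by periodicity, leaving no flat face), one finds $\ell(1)=\mes P\bigl[(\lambda+\delta)-\bar c(x_1,t)\cdot p-\ratSV\,\bar\Theta(x_1,t,\mu,\nu)\bigr]$, which is precisely the reorganisation of the formal relation (\ref{equ16}). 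Hence $\ell(1)=0$ is exactly Equation (\ref{equ40}), proving the equivalence. If the smoothness of $u_1$ is needed later, standard elliptic regularity on the $C^2$ domain $\Omega$ upgrades the $H^1_\sharp$ weak solution to the regularity permitted by the data $c,\eta_p,\eta_a,\varrho$.

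The main obstacle is the first-order transport term $c\cdot D_Xu_1$: it is of the same order as the diffusion and cannot be absorbed as a lower-order perturbation, so coercivity of $a$ is not granted a priori — it is recovered only through the skew-symmetry furnished by {\bf (C2)}--{\bf (C3)}, which is also what makes the kernel of the formal adjoint one-dimensional and forces the solvability condition to collapse into the clean averaged identity (\ref{equ40}). A secondary but genuinely technical point is the bookkeeping of the boundary terms on $\partial P$ — keeping separate the ``cut'' faces $\partial P\cap\Omega$ (handled by periodicity) and the ``lateral'' faces $\partial P\cap\partial\Omega$ (carrying the Neumann data), and checking $\int_{\partial P\cap\partial\Omega}N\,d\sigma=0$, which is exactly what lets the $p$-dependent surface contribution merge into the $\bar c\cdot p$ term.
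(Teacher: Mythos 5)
Your argument is correct and establishes exactly the statement of the lemma, but it takes a more hands-on route than the paper, whose proof is only a few lines: the paper invokes the Fredholm alternative for $\mathcal{L}:=-\chi\Delta-c\cdot D$ with the Neumann condition, asserts self-adjointness from {\bf (C2)}, identifies the kernel with the constants via the Strong Maximum Principle, and reads the compatibility condition off the formal computation (\ref{equ15})--(\ref{equ16}). You instead give a self-contained variational proof: skew-symmetry of the drift (from {\bf (C2)}, {\bf (C3)} and periodicity) yields $a(w,w)=\chi\int_P|D_Xw|^2\,dX$, Poincar\'e--Wirtinger plus Lax--Milgram solve the problem on the mean-zero subspace, and since $a(w,1)=0$ for every $w$ the only obstruction to solving on all of $H^1_\sharp(P)$ is $\ell(1)=0$, which you compute explicitly to be (\ref{equ40}); uniqueness up to constants follows from the same energy identity. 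Each route has its merits: yours makes completely explicit where {\bf (C2)}--{\bf (C3)} enter (coercivity and the vanishing of $a(\cdot,1)$), avoids the maximum principle, and quietly tightens the paper's wording --- the drift term is skew-adjoint rather than making $\mathcal{L}$ self-adjoint, the operative fact being that the kernel of the adjoint also reduces to the constants, which your computation exhibits directly. The paper's citation-style argument is shorter and delivers the $C^2$ corrector (used pointwise in the perturbed-test-function step) in one stroke, whereas in your setting the elliptic-regularity upgrade from $H^1_\sharp$ must still be invoked and, as you note, is limited by the mere continuity assumed on $\eta_p,\eta_a,\varrho$. Two small remarks: your identity $\int_{\partial P\cap\partial\Omega}N\,d\sigma=0$ plays the same role as the paper's device of writing $\Delta_X u_1=\Delta_X(u_1+p\cdot X)$ before integrating over $P$; and when matching $\ell(1)=0$ with (\ref{equ16}) and (\ref{equ40}), keep the minus sign in front of $\frac{\alpha}{\omega}\varrho\,v$ --- the displayed definition of $\bar{\Theta}$ in the paper carries a sign typo relative to $\Theta$, and your computation is consistent with $\Theta$ and with (\ref{equ16}).
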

\begin{proof} The proof is standard and relies on the Fredholm alternative. By {\bf (C2)}, the operator $\mathcal{L}:= - \chi \Delta - c\cdot D$ together with Neumann boundary conditions, is self-adjoint and the Strong Maximum Principle shows that the kernel of this operator only contains the constant functions. On the other hand, the computations of the previous subsection ensure that the right hand side of equation (\ref{cellpb}) (including the boundary condition) is orthogonal to the constant functions, i.e. the kernel of $\mathcal{L}$, if and only if Equation~(\ref{equ40}) holds. Therefore this condition implies the existence of a solution of (\ref{cellpb}), which is $C^2$ by using standard elliptic regularity. This solution is of course unique up to an additive constant because of the structure of the kernel.  
\end{proof}

We pick some constant $0<\gamma \ll 1$. In view of Lemma~\ref{prop}, for the choice of the parameters $x_1^0$, $t_0$, $\delta :=\zeta(x_1^0,t_0)\varphi(v(x_1^0,t_0))$
\begin{equation}\label{equ23}
\nu= v(x_1^0,t_0) \; , \;  \mu = u(x_1^0,t_0)-\gamma \; , \; p= \dfrac{\partial \phi}{\partial x_1}(x_1^0,t_0)e_1 
\end{equation}
and if we choose $\lambda$ given by Equation~(\ref{equ40}), there exists a smooth solution $u_1 (X)$ of (\ref{cellpb}) associated to these parameters. 

We use this function to introduce the perturbed test-function $\phi ^\epsilon$ 
$$\phi ^\epsilon (x,t)=\phi (x_1,t)+\epsilon u_1 (\dfrac{x}{\epsilon}) \; .$$ 

By standard results (\cite{Barles1995}, p.88), for $\e$ small enough, there exists a maximum point $(x^\epsilon,t^\epsilon)$ of $u^\epsilon -\phi^\epsilon$ near $((x_1^0,0,0), t_0) $. Moreover
\begin{equation}\label{equ22}
x^\epsilon \to (x_1^0,0,0) ,\quad  t^\epsilon \to t_0\quad\text{as} \quad \epsilon \to 0
\end{equation}
\begin{align}\label{equ21}
u^\epsilon(x^\epsilon, t^\epsilon) \to \bar{u}(x_1^0,t_0)\quad \text{as} \quad \epsilon \to 0
\end{align}

First, we prove that the maximum point $(x^\epsilon,t^\epsilon)$ can not be on the boundary for $\e$ small enough. Otherwise, if $(x^\epsilon,t^\epsilon) \in \partial \Omega_\epsilon \times (0,T)$, then, by the maximum point property on the boundary
$$\dfrac{\partial }{\partial n} (u^\epsilon(x^\epsilon,t^\epsilon)-\phi(x_1^\epsilon,t^\epsilon)
-\epsilon u_1(X^\epsilon))\geq 0 $$
where $X^\epsilon=x^\epsilon/\epsilon$, thus
$$
\dfrac{\partial u^\epsilon}{\partial n} (x^\epsilon,t^\epsilon)
-[\dfrac{\partial \phi}{\partial x_1}(x_1^\epsilon,t^\epsilon).e_1+D_X u_1(X^\epsilon)].n\geq 0.
$$
Using the smoothness of $\phi$ and recalling that $n (x^\epsilon) = N(X^\epsilon)$, we can write this inequality as
\begin{equation}\label{equ41}
\dfrac{\partial u^\epsilon}{\partial n} (x^\epsilon,t^\epsilon)
-[p+D_X u_1(X^\epsilon)]\cdot N\geq o(1),
\end{equation}
where, here and below, $o(1)$ denotes a quantity which goes to $0$ as $\e$ tends to $0$.

Recalling Equation (\ref{equnutb}) permits to obtain
\begin{equation}\label{equ19}
-\frac{1}{\chi}\Theta\left(x_1^\e,X^\e,t^\e,u^\epsilon(x_1^\e,t^\e),\ve(x_1^\e,t^\e)\right)-\bigg(p + D_X u_1(X^\epsilon)\bigg)\cdot N  \geq o(1).
\end{equation}
Because of (\ref{equ22})-(\ref{equ21}) and the continuity properties of the functions $\eta_p, \eta_a$, $\varrho$ and $g_a$ and because of the convergence of $\ve$, Equation (\ref{equ19}) gives
$$
-\Theta\left(x_1^0,X^\e,t_0,\bar{u}(x_1^0,t^0),v(x_1^0,t^0))\right)
-\chi\bigg(p + D_X u_1(X^\epsilon)\bigg)\cdot N  \geq o(1).\
$$%
Now we replace $\bar{u}(x_1^0,t^0)$ by $\mu+\gamma$
$$\Theta\left(x_1^0,X^\e,t_0,\mu+\gamma,v(x_1^0,t^0))\right)
-\chi\bigg(p + D_X u_1(X^\epsilon)\bigg)\cdot N  \geq o(1).$$
Because of the properties of $\eta_p, \eta_a$, $\varrho$ and $g_a$,  $\Theta(x_1,X,t,u,v)$ is a strictly increasing function in $u$ (uniformly wrt the other parameters); by using (\ref{cellpb}) together with (T2), we obtain
\begin{equation}
-\underline{\eta}\gamma \geq - \Theta\left(x_1^0,X^\e,t_0,\mu +\gamma ,v(x_1^0,t^0))\right)
+ \Theta\left(x_1^0,X^\e,t_0,\mu,v(x_1^0,t^0))\right)
 \geq o(1)
\end{equation}
which yields the contradiction.

Therefore the maximum point $(x^\epsilon, t^\epsilon)$ of $u_\epsilon-\phi_\epsilon$ is in $ \Omega_\epsilon \times (0,T)$. Since $u_\epsilon$ is a solution of (\ref{feedtonut1}), classical properties yield to the inequality 
\begin{align}
\dfrac{\partial \phi}{\partial t}(x^{\epsilon}_1,t^\epsilon)- \chi \Delta_{X} u_1(X^\epsilon)+&c(x^{\epsilon},X^\epsilon,t^\epsilon)\left(\dfrac{\partial \phi}{\partial x_1}(x^{\epsilon}_1,t^\epsilon)e_1
+D_X u_1(X^\epsilon)\right) \nonumber \\
& -\zeta(x_1^\e,t^\e)\varphi(v)\leq o(1).
\end{align}
For $\epsilon$ small enough, using (\ref{equ22}) and the regularity of $\phi $ imply
\begin{equation}
\dfrac{\partial \phi}{\partial t}(x^{0}_1,t^0)- \chi\Delta_{X} u_1(X^\epsilon)+c(x_1^{0},X^\epsilon,t_0)(p+D_Xu_1(X^\epsilon))-\delta \leq o(1).
\end{equation}
Furthermore, by Equation (\ref{cellpb}), 
$$\lambda= -\chi \Delta u_1+c(x_1^{0},X^\epsilon,t_0)(p+D_Xu_1)-\delta$$
which yields to the inequality
$$ \dfrac{\partial \phi}{\partial t}(x_1^0,t_0)+\lambda \leq o(1).$$
As we already mentioned it above, this inequality is equivalent to the inequality (\ref{equ20}) by just inserting the value of $\lambda$ from Lemma \ref{prop} into the above equation and letting $\e$ tend to $0$. And the proof of this first case is complete.

We should now consider the cases when the maximum point is achieved either for $t=0$ or at $x_1 = 0$ to complete the proof.

For the initial condition ($t=0$), a combination of the above proof and classical arguments shows that we have the viscosity inequality
\begin{equation*}
  \min\{\dfrac{\partial \bar{u}}{\partial t}+\bar{c}(0,t)\cdot e_1 \dfrac{\partial \bar{u}}{\partial x_1}-
\zeta(0,t)\varphi(v)+\ratSV\Theta (x_1,t,u,v),\bar{u}\}\leq0\, ,
  \end{equation*}
if $x_1>0$, while, for $x_1=0, t>0$, one has
\begin{equation*}
  \min\{\dfrac{\partial \bar{u}}{\partial t}+\bar{c}(x_1,t)\cdot e_1 \dfrac{\partial \bar{u}}{\partial x_1}-
\zeta(x_1,t)\varphi(v)+\ratSV
\Theta(x_1,t,u,v)),\bar{u}-u_0(t)\}\leq0\, ,
  \end{equation*}
and for the case $x_1=0, t=0$ --which is a priori a particular case--, since $u_0(0)=0$, we can still use one of these inequalities which are the same.

It is proved (cf. (\cite{Barles1995}p.99)\cite { CrandallN.S.}) that, if $\bar{u}$ is a subsolution of (\ref{equ18}), then the above initial condition in viscosity sense reduces in fact to a classical one 
$$\bar{u}(x_1^0,0)\leq 0 \quad \hbox{on  }[0,+\infty) .$$
Since $\bar{c}(x_1,t)\cdot e_1 >0$, the generalized Dirichlet condition reduces also to a classical one (cf. \cite{Barles1995}(cor 4.1 in p.169)), namely
$$\bar{u}(0,t)\leq u_0(t) \quad \hbox{on  }[0, T),$$
as a consequence of the fact that the characteristic are pointing outward the domain on the boundary.

To conclude the proof, we invoke a (strong) comparison result for (\ref{equ18}) : such result is classical and it yields $\bar{u} \leq \underline{u}$ on $[0,+\infty) \times [0,T]$, implying the desired convergence result.

It remains to prove that the $\ue$'s are indeed uniformly bounded. To this aim, we recall that $\Omega $ is a $C^2$-domain and therefore there exists a $x_1$-periodic, $C^2$-function $d :\bar{\Omega} \to [0,\infty)$ such that
$$Dd(x)\cdot N \leq -1 \quad \hbox{on}\quad \partial\Omega.$$
Because of the particular form of $\Omega$, the function $d$, as well as its first and second derivatives, are also bounded functions.

Now, we introduce the functions $w^\e\, :\, \bar{\Omega}_\e\times [0,T]\to [0,\infty)$ given by
 $$w^\e(x,t)=k_1+k_2t+k_3(\e||d||_\infty-\e d(x/\e)) \; ,$$
for some constants $k_1, k_2, k_3 \geq0.$
We first plug these functions into the boundary condition (\ref{equnutb}) : using that the $\ve$'s are bounded and that the absorption terms are positive, the supersolution condition is satisfied if we choose $k_3$ large enough. Then we consider Equation (\ref{feedtonut1}) : since $d$ has bounded first and second derivatives, the supersolution condition is also satisfied by choosing $k_2$ large enough. Finally we choose $k_1$ large enough to treat the boundary condition (\ref{Pylfeed}).

Applying the Maximum Principle (or a comparison result for viscosity solutions) gives $\ue (x,t) \leq w^\e (x,t) $ in $\overline{\Omega}_\e \times [0,T]$ and the proof is complete.
\end{proof}

\end{document}